\newcommand{\until}[1]{\{1,\dots, #1\}}
\newcommand{\setdef}[2]{\{#1 \; | \; #2\}}
\newcommand{\map}[3]{#1: #2 \rightarrow #3}
\newcommand\oprocendsymbol{\hbox{$\square$}}
\newcommand\oprocend{\relax\ifmmode\else\unskip\hfill\fi\oprocendsymbol}
\DeclareSymbolFont{bbold}{U}{bbold}{m}{n}
\DeclareSymbolFontAlphabet{\mathbbold}{bbold}
\newcommand{\vect}[1]{\mathbbold{#1}}
\newcommand{\scirc}{\raise1pt\hbox{$\,\scriptstyle\circ\,$}}
\newcommand{\real}{\mathbb{R}}
\newcommand{\realnonnegative}{\real_{\geq0}}
\renewcommand\thmcontinues[1]{Continued}
\newtheorem{example}[theorem]{\textbf{Example}}
\crefname{hypothesis}{Hypothesis}{Hypotheses}
\title{Graph-Theoretic Stability Conditions for Metzler Matrices and
	Monotone Systems\thanks{Submitted to the editors DATE.
		\funding{This work has been supported in part by Air Force Office of Scientific
			Research award FA9550-15-1-0138.}}}
\author{Xiaoming Duan\thanks{Mechanical Engineering Department and the Center of Control, Dynamical
		Systems and Computation, UC Santa Barbara, CA 93106-5070, USA.
		(\email{xiaomingduan@ucsb.edu}, \email{saber@ucsb.edu}, \email{bullo@ucsb.edu}).}
	\and Saber Jafarpour\footnotemark[2]
	\and Francesco Bullo\footnotemark[2]}
\begin{document}

\maketitle

% REQUIRED
\begin{abstract}
This paper studies the graph-theoretic conditions for stability of
positive monotone systems. Using concepts from input-to-state
stability and network small-gain theory, we first establish necessary and
sufficient conditions for the stability of linear positive systems
described by Metzler matrices. Specifically, we derive two sets of stability conditions based on two
forms of input-to-state stability gains for Metzler systems, namely
max-interconnection gains and sum-interconnection gains. Based on
the max-interconnection gains, we show that the cyclic small-gain theorem
becomes necessary and sufficient for the stability of Metzler systems;
based on the sum-interconnection gains, we obtain novel graph-theoretic
conditions for the stability of Metzler systems. All these conditions
highlight the role of cycles in the interconnection graph and unveil how
the structural properties of the graph affect stability. Finally, we
extend our results to the nonlinear monotone system and obtain similar
sufficient conditions for global asymptotic stability.
\end{abstract}

% REQUIRED
\begin{keywords}
Metzler matrices, linear positive systems, Hurwitz stability, monotone systems, small-gain theorems
\end{keywords}

% REQUIRED
\begin{AMS}
  15B48, 93D20, 93D25
\end{AMS}

\section{Introduction}
\paragraph*{Problem description and motivation}
Much attention in recent years has been focused on multi-agent systems, but the
majority of efforts has been devoted to averaging dynamics and consensus
behavior. Much less attention has been drawn to dynamical flow systems,
modeled as monotone or cooperative systems~\cite{MWS:85,HLS:95}. Notable
exceptions are a collection of recent papers motivated by applications to
traffic and biological systems~\cite{DA-EDS:03,GC:17} as well as the
long-standing interest in positive systems~\cite{LF-SR:00,AR:15}. Despite
these remarkable recent works, many open questions remain.

This paper focuses on a key foundational question for linear monotone
systems, i.e., positive systems modeled by Metzler matrices, and on its
application to the study of nonlinear monotone systems: what are
graph-theoretical conditions for the Hurwitzness of a Metzler matrix?
While a graph theoretical treatment is available for a subclass of Metzler
matrices known as ``compartmental matrices''~\cite{GGW-MC:99}, a general
treatment is lacking.  This is in stark contrast with the comprehensive
understanding of the graph theoretical conditions guaranteeing convergence
to consensus for row-stochastic matrices in averaging systems. Related to
this open question is the work in~\cite{MArcak:11}. The graph-theoretic
conditions are particularly useful because they allow us to analyze
stability based on the structural properties of the interconnection network
given the existence of perturbations or uncertainties on the parameters.

For nonlinear monotone systems, much recent progress is documented
in~\cite{SC:16,SC:19}, where a basic fundamental connection is built
between monotone systems and contractive systems.  A notable gap, however,
remains, in explaining the relationship between the treatment of monotone
contractive systems and the stability theory of network small gain
developed in~\cite{SND-BSR-FRW:10,TL-ZJP-DJH:14}.

In summary, we aim to develop an algebraic graph theory for monotone
dynamical systems, starting with the linear case of Metzler matrices and
continuing with the nonlinear setting and its connections with network
small-gain theorems.

\paragraph*{Literature review}

Monotone dynamical systems appear naturally in numerous applications
and have many appealing properties. The mathematical theory of nonlinear
monotone systems has been vastly studied in dynamical system
literature~\cite{SS:76,MWS:85,HLS:95}. In control community, the
notion of monotonicity has been extended to systems with
inputs and outputs, and properties of the interconnected monotone
systems have been studied~\cite{DA-EDS:03}. It is well known that
linear monotone systems (also referred to as linear positive systems)
are described by Metzler matrices. Conditions for stability of Metzler
matrices have been studied extensively in the literature. Narendra and Shorten, \emph{et al.}
established an iterative method based on the Schur complement to check the
Hurwitzness of Metzler matrices in \cite{KSN-RS:10,MS-FW-RS:17}. A graph-theoretic characterization for diagonal stability of matrices whose
underlying digraph is a cactus graph was proposed in~\cite{MArcak:11}.
Briat studied the sign stability of Metzler matrices and block
Metzler matrices
%where sign stability of a matrix $A$ is defined as
%the stability of the class of matrices having the same sign pattern as$A$
in \cite{CB:17}. Blanchini \emph{et al.} studied switched Metzler systems
and Hurwitz convex combinations in~\cite{FB-PC-MEV:12}. Stability of
switched Metzler systems has also been studied in~\cite{ZM-WX-KHJ-SH:17},
where the authors provided guarantees for robustness with respect to
delays. In~\cite{AR:15}, scalable methods for analysis and control of
large-scale linear monotone systems have been studied. The admissibility,
stability, and persistence of interconnected positive heterogeneous systems
have been studied in~\cite{YE-DP-DA:17}. For nonlinear monotone systems,
using novel connections to the contraction theory, Coogan established
sufficient conditions for global stability of monotone systems
\cite{SC:16,SC:19}. We refer the interested readers to~\cite{LF-SR:00} for
a detailed study of linear positive systems and to the survey
paper~\cite{EDS:07} for theoretical results and applications of
interconnected monotone systems.

Small-gain theorems are arguably one of the fundamental results for
stability of interconnected systems. Started with the works by
Zames~\cite{GZ:66(I)}, the early classical studies on small-gain theorems
mostly focused on stability analysis using linear
gains~\cite{IWS:64}. Introduction of the notion of input-to-state stability
(ISS) in the seminal paper~\cite{EDS:89} triggered a paradigm shift in the
study of small-gain theorems. More recent works on small-gain theorems
focused on the input-to-state framework and they provided results in terms
of nonlinear notions of input-to-state
gains~\cite{ZPJ-ART-LP:94,SND-BSR-FRW:10}.

\paragraph*{Contributions} In this paper, we study the graph-theoretic stability conditions for Metzler matrices. By using concepts from the small-gain theorems for interconnected systems, we obtain necessary and sufficient conditions for Hurwitzness of Metzler matrices in terms of the input-to-state gains, and we also extend our results to the nonlinear monotone systems. Our main contributions are as follows. First, we characterize two types of input-to-state stability gains for linear Metzler systems, namely max-interconnection gains and sum-interconnection gains. Second, using the max-interconnection and the sum-interconnection gains, we obtain two main theorems on graph-theoretic characterizations for Hurwitzness of Metzler matrices. Our conditions highlight the role of cycles and cycle gains and provide valuable insights for connections between the network	structure and network functions. In particular, our characterizations for Hurwitzness of Metzler matrices using the max-interconnection gains	coincide with the well-known cyclic small gain theorem~\cite[Theorem 3.1]{TL-ZJP-DJH:14}, which becomes necessary and sufficient in our case; based on the sum-interconnection gains, in addition to necessary and sufficient cycle gain conditions that depend on the cycle structure of the interconnection graph, we also show that all cycle gains being less than $1$ is a necessary condition and the sum of cycle gains being less than $1$ is a sufficient condition.
Finally, we extend our stability analysis using max-interconnection and	sum-interconnection gains to nonlinear monotone systems. As a result, we provide two equivalent sufficient conditions for global stability of monotone nonlinear systems.

\paragraph*{Paper organization}
We review the known stability results for Metzler matrices in Section~\ref{sec:review}. The input-to-state stability and two forms of ISS gains are introduced in Section~\ref{sec:ISS}, where we also characterize different ISS gains for Metzler systems. The main results on graph-theoretic conditions for Hurwitzness of Metzler matrices are presented in Section~\ref{sec:main}. We extend the conditions to nonlinear monotone systems in Section~\ref{sec:nonlinear}. A few additional concepts and proofs are included in Section~\ref{sec:additional}. We conclude the paper in Section~\ref{sec:conclusion}.

\section{Review of Metzler matrices}\label{sec:review}

\subsection{Notation and preliminaries}
Let $\real$ and $\realnonnegative$ be the set of real and nonnegative real numbers, respectively. For a vector $v\in\real^n$, its Euclidean norm is
denoted by $|v|$. In Particular, if $v\in\real$, then $|v|$ is the absolute
value of $v$. For a finite set $S$, $|S|$ is the cardinality. For $t\geq0$
and a time-varying vector signal $\map{x}{[0,t]}{\real^n}$, we define the
norm
\begin{align*}
\|x\|_{[0,t]} = \underset{s\in[0,t]}{\mathrm{ess\,sup}}\; |x(s)|.
\end{align*}
Moreover, for $x:\real_{\geq0}\mapsto \real^n$, $\|x\|_{\infty}
=\mathrm{ess\,sup}_{s\ge 0} |x(s)|$.  A continuous function
$\map{\alpha}{\realnonnegative}{\realnonnegative}$ is a class $\mathcal{K}$
function if it is strictly increasing and $\alpha(0)=0$; it is a class
$\mathcal{K}_{\infty}$ function if it is a class $\mathcal{K}$ function and
$\lim_{s\to\infty}\alpha(s)=\infty$. A continuous function
$\map{\beta}{\realnonnegative\times\realnonnegative}{\realnonnegative}$ is
a class $\mathcal{KL}$ function if $\beta(s,t)$ is a class $\mathcal{K}$
function of $s$ for fixed $t$, and a decreasing function of $t$ with
$\lim_{t\to\infty}\beta(s,t)=0$ for fixed $s$.

For a matrix $A\in\real^{n\times n}$, its associated graph
$\mathcal{G}(A)=(V,\mathcal{E},A)$ is a weighted digraph defined as
follows: $V=\{1,\dots,n\}$ is the set of nodes,
$\mathcal{E}=\{(j,i)\,|\,i,j\in V, a_{ij}\neq0\}$ is the set of edges, and $A=\{a_{ij}\}$ is the weight matrix with $a_{ij}$ being the weight on edge $(j,i)\in\mathcal{E}$. For
$i\in V$, the neighbor set of node $i$ is defined by $\mathcal{N}_i=\{j\in
V\,|\,(j,i)\in\mathcal{E}\}$. A matrix $A\in\real^{n\times n}$ is
irreducible if its associated digraph $\mathcal{G}(A)$ is strongly
connected. A strongly connected component of a digraph $\mathcal{G}$ is a strongly connected subgraph such that it is not strictly contained in any other strongly connected  subgraph of $\mathcal{G}$.

In a digraph $\mathcal{G}=(V,\mathcal{E})$, a simple cycle $c$
in $\mathcal{G}$ is a directed path that starts and ends at the same node
and has no repetitions other than the starting and ending nodes. Two
simple cycles $c_1$ and $c_2$ in $\mathcal{G}$ intersect
if they share at least one common node, i.e., $c_1\cap c_2\neq \emptyset$;
$c_1$ is a subset of $c_2$ if all the nodes on $c_1$ are also on $c_2$.
Self loops are not considered as simple cycles in this paper.

For a matrix $A\in\real^{n\times n}$, the leading principal
submatrices of $A$ are given by $A_I$, where $I=\{1,\dots,i\}$ is the set
of indices for all $i\in\{1,\dots,n\}$. In particular, when
$I=\{1,\dots,n\}$, we have $A_I=A$. A matrix $M\in\real^{n\times n}$
is Metzler if all its off-diagonal elements are nonnegative. 
%A matrix $C\in\real^{n\times n}$ is compartmental if it is Metzler and has nonpositive column sums.

The following lemma will be useful later in the paper.

\begin{lemma}[Bounding sum by maximum]\label{lemma:boundssummax}
	Let $\{x_1,\dots,x_n\}$ and $\{\alpha_1,\dots,\alpha_n\}$ be a set of
	nonnegative and positive real numbers respectively.  If
	$\sum_{i=1}^n\frac{1}{\alpha_i}\leq1$, then
	\begin{equation*}
	\sum_{i=1}^nx_i \leq \max_{i\in\until{n}}\{\alpha_ix_i\}.
	\end{equation*}
\end{lemma}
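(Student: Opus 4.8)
The plan is to reduce the inequality to a simple per-term bound obtained directly from the definition of the maximum, and then to sum and invoke the hypothesis on the reciprocals. First I would set $M := \max_{i\in\until{n}}\{\alpha_i x_i\}$, so that by the definition of the maximum we have $\alpha_i x_i \le M$ for every $i\in\until{n}$. Since each $\alpha_i$ is strictly positive, I can divide through by $\alpha_i$ to obtain the per-term estimate $x_i \le M/\alpha_i$ for all $i$.

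Summing these $n$ inequalities and factoring out the common $M$ then yields
\begin{equation*}
\sum_{i=1}^n x_i \le \sum_{i=1}^n \frac{M}{\alpha_i} = M\sum_{i=1}^n\frac{1}{\alpha_i}.
\end{equation*}
At this point the hypothesis $\sum_{i=1}^n 1/\alpha_i \le 1$ is meant to close the argument, but doing so requires $M \ge 0$: replacing the factor $\sum_{i=1}^n 1/\alpha_i$ by the larger quantity $1$ preserves the inequality only when the multiplier $M$ is nonnegative. This is precisely where the nonnegativity of the $x_i$ is used. Since each $\alpha_i x_i \ge 0$, their maximum $M$ is also nonnegative, whence $M\sum_{i=1}^n 1/\alpha_i \le M\cdot 1 = M$, and the chain closes to $\sum_{i=1}^n x_i \le M = \max_{i\in\until{n}}\{\alpha_i x_i\}$.

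I do not expect any genuine obstacle in this proof; the single point that warrants care is the sign of $M$ in the final step. This explains why the assumption that the $x_i$ are nonnegative (rather than arbitrary reals) is essential and cannot be dropped, even though it plays no role in deriving the per-term bound $x_i \le M/\alpha_i$.
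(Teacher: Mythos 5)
Your proof is correct and is essentially the same as the paper's: the paper picks an index $s$ achieving the maximum and bounds $\sum_i x_i \le \sum_i \alpha_s x_s/\alpha_i \le \alpha_s x_s$, which is exactly your per-term bound $x_i \le M/\alpha_i$ followed by summation and the reciprocal-sum hypothesis. Your explicit remark that the final step needs $M \ge 0$ (hence the nonnegativity of the $x_i$) is a point the paper uses implicitly without comment.
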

\begin{proof}
	Let $s\in\until{n}$ satisfy $\alpha_ix_i\leq\alpha_sx_s$ for all
	$i\in\{1,\dots,n\}$. Then
	\begin{equation*}
	\sum_{i=1}^nx_i\leq \sum_{i=1}^n\frac{\alpha_sx_s}{\alpha_i}
	\leq \alpha_sx_s=\max_{i\in\until{n}}\{\alpha_ix_i\}.
	\end{equation*}
\end{proof}

\subsection{Algebraic conditions for Hurwitzness of Metzler matrices}
We collect a few well-known equivalent conditions for the Hurwitzness
of  Metzler matrices in the following lemma.

\begin{lemma}[Properties of Hurwitz Metzler matrices {\cite[Theorem 15.17]{FB:19}~{\cite[Theorem 13]{LF-SR:00}}}]\label{lemma:equivalentchar}
	Let $M\in\real^{n\times n}$ be a Metzler matrix, then the following statements are equivalent:
	\begin{enumerate}[label=(\roman*)]
		\item\label{p1:hur} $M$ is Hurwitz;
		\item\label{p2:inv} $M$ is invertible and $-M^{-1}\geq 0$;
		\item\label{p3:minor} all leading principal minors of $-M$ are positive;
		\item\label{p4:right} there exists $\xi\in\real^n$ such that $\xi>\mathbb{0}_n$ and $M\xi<\mathbb{0}_n$;
		\item\label{p5:left} there exists $\eta\in\real^n$ such that $\eta>\mathbb{0}_n$ and $\eta^\top M<\mathbb{0}_n$;
		\item\label{p6:diag} there exists a diagonal matrix $P\succ0$ such that $M^\top P +PM \prec 0$.
	\end{enumerate}
\end{lemma}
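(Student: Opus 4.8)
The plan is to establish a web of implications connecting all six conditions, leaning on two foundational facts about Metzler matrices: the Perron--Frobenius theorem and the nonnegativity of the matrix exponential. First I would fix a scalar $s>0$ large enough that $N:=M+sI$ is entrywise nonnegative; since the eigenvalues of $M$ are those of $N$ shifted by $-s$, and since $\rho(N)$ is a real eigenvalue of $N$ that dominates the real part of every other eigenvalue, $M$ is Hurwitz if and only if $\rho(N)<s$. This recasts \ref{p1:hur} as a Perron--Frobenius statement and, crucially, supplies a nonnegative left eigenvector $w\geq0$, $w\neq0$, with $w^\top M=(\rho(N)-s)\,w^\top$, which I will reuse below.

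Next I would run the cycle \ref{p1:hur}$\Rightarrow$\ref{p2:inv}$\Rightarrow$\ref{p4:right}$\Rightarrow$\ref{p1:hur}. For \ref{p1:hur}$\Rightarrow$\ref{p2:inv}, Hurwitzness makes $0$ a non-eigenvalue, so $M$ is invertible and the identity $-M^{-1}=\int_0^\infty e^{Mt}\,dt$ holds; because $M$ is Metzler we have $e^{Mt}=e^{-st}e^{Nt}\geq0$, so the integral is nonnegative. For \ref{p2:inv}$\Rightarrow$\ref{p4:right}, the choice $\xi=-M^{-1}\mathbb{1}_n$ works: $\xi>\mathbb{0}_n$ since $-M^{-1}\geq0$ is invertible and hence has no zero row, while $M\xi=-\mathbb{1}_n<\mathbb{0}_n$. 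For \ref{p4:right}$\Rightarrow$\ref{p1:hur}, pairing $M\xi<\mathbb{0}_n$ with the left eigenvector $w$ gives $0>w^\top(M\xi)=(\rho(N)-s)\,w^\top\xi$, and since $w^\top\xi>0$ we conclude $\rho(N)-s<0$. The equivalence of \ref{p5:left} then follows for free by applying this chain to $M^\top$, which is again Metzler and has the same spectrum, with $\eta$ playing the role of $\xi$.

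For \ref{p3:minor} I would argue both directions separately. Going forward from \ref{p1:hur}: using $\xi$ from \ref{p4:right}, every leading principal submatrix $M_I$ is Metzler and satisfies $M_I\xi_I<\mathbb{0}_{|I|}$ (dropping the nonnegative off-diagonal terms only decreases the entries), so each $M_I$ is Hurwitz by the chain above; a real matrix all of whose eigenvalues lie in the open right half-plane has positive determinant (its real eigenvalues are positive and its complex eigenvalues occur in conjugate pairs of positive product), hence $\det(-M_I)>0$. For the converse \ref{p3:minor}$\Rightarrow$\ref{p1:hur} I would induct on $n$: the top-left $(n-1)\times(n-1)$ block inherits positive leading minors, so by induction it is Hurwitz Metzler with nonnegative negated inverse; writing $M$ in block form with nonnegative off-diagonal blocks $b,c\geq0$, the Schur-complement factorization shows that $\det(-M)>0$ forces the scalar Schur complement to be negative, after which the block-inverse formula exhibits every block of $-M^{-1}$ as a sum and product of nonnegative quantities, yielding $-M^{-1}\geq0$ and hence \ref{p2:inv}.

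Finally, \ref{p6:diag}$\Rightarrow$\ref{p1:hur} is the standard Lyapunov argument. For the delicate direction \ref{p1:hur}$\Rightarrow$\ref{p6:diag} I would take $\xi$ from \ref{p4:right} and $\eta$ from \ref{p5:left} and set $P=\diag(\eta_1/\xi_1,\dots,\eta_n/\xi_n)\succ0$. The matrix $W:=M^\top P+PM$ is symmetric, and its off-diagonal entries $p_im_{ij}+p_jm_{ji}$ are nonnegative, so $W$ is itself Metzler; a short computation using $p_i\xi_i=\eta_i$ gives $(W\xi)_i=p_i(M\xi)_i+(\eta^\top M)_i<0$ for every $i$, so $W\xi<\mathbb{0}_n$ with $\xi>\mathbb{0}_n$. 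Thus $W$ is a symmetric Metzler matrix satisfying condition \ref{p4:right}, so the already-established implication \ref{p4:right}$\Rightarrow$\ref{p1:hur} makes $W$ Hurwitz, and symmetry upgrades this to $W\prec0$, which is exactly \ref{p6:diag}. I expect the induction in \ref{p3:minor}$\Rightarrow$\ref{p1:hur} to be the most laborious step, since propagating nonnegativity through the block-inverse formula requires careful bookkeeping; by contrast, the apparent difficulty of \ref{p6:diag} — producing a diagonal certificate — dissolves once one observes that the natural candidate $W$ is again Metzler and can be fed back into the earlier part of the argument.
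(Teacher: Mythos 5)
Your proof is correct, but note that the paper itself offers no proof of this lemma at all: it is stated as a known result, cited to \cite[Theorem 15.17]{FB:19} and \cite[Theorem 13]{LF-SR:00}, so there is no internal argument to compare against. What you have produced is a self-contained derivation of that cited result, and every step checks out. The Perron--Frobenius shift $N = M + sI$ correctly reduces Hurwitzness to $\rho(N) < s$ and supplies the nonnegative left eigenvector needed to close the cycle (i)$\Rightarrow$(ii)$\Rightarrow$(iv)$\Rightarrow$(i); the integral identity $-M^{-1} = \int_0^\infty e^{Mt}\,dt$ together with $e^{Mt} = e^{-st}e^{Nt} \geq 0$ is the standard route to (ii); the submatrix argument $(M_I\xi_I)_i \leq (M\xi)_i < 0$ and the conjugate-pair determinant observation give (i)$\Rightarrow$(iii); and the Schur-complement induction for (iii)$\Rightarrow$(i) is sound, since $\det(-M) = \det(-M_{n-1})\cdot(-S)$ forces the scalar Schur complement $S$ to be negative and the block-inverse formula then exhibits $-M^{-1} \geq 0$ block by block, with no circularity because the implications invoked at dimension $n-1$ have already been established in all dimensions. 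The most elegant step is your treatment of (vi): forming $W = M^\top P + PM$ with $P = \diag(\eta_i/\xi_i)$, observing that $W$ is itself a symmetric Metzler matrix with $W\xi < \mathbb{0}_n$, and feeding it back through your own implication (iv)$\Rightarrow$(i) to conclude $W \prec 0$ --- this self-referential use of the lemma is essentially the same device used in the textbook treatment the paper cites, so your argument can legitimately stand in for the missing proof.
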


The inequalities in \ref{p2:inv}, \ref{p4:right} and \ref{p5:left} of Lemma~\ref{lemma:equivalentchar} are componentwise. The matrix inequalities in \ref{p6:diag} indicate positive/negative definiteness.

\begin{remark}
  \begin{enumerate}
  \item To the best of our knowledge, the equivalence of parts~\ref{p1:hur}
    and~\ref{p3:minor} in Lemma~\ref{lemma:equivalentchar} has not been
    fully exploited in the literature, and we build one of our main results
    based on this condition.
  \item If the Metzler matrices are symmetric, then the necessary and
    sufficient condition in Lemma~\ref{lemma:equivalentchar}\ref{p3:minor}
    is exactly the Sylvester's criterion for negative definiteness of
    general symmetric matrices.
  \item The equivalence of parts~\ref{p1:hur} and \ref{p6:diag} in
    Lemma~\ref{lemma:equivalentchar} implies that for Metzler matrices, the
    Hurwitzness and diagonal stability are equivalent.
  \end{enumerate}
\end{remark}

Based on the Schur complement, Narendra \emph{et al.} propose an iterative
method to verify the Hurwitzness of a Metzler matrix \cite{KSN-RS:10}.
Partition a Metzler matrix $M\in\real^{n\times n}$ as follows
\begin{equation*}
M=\begin{bmatrix}
M_{n-1}&b_{n-1}\\
c_{n-1}^\top&d_{n-1}
\end{bmatrix}
\end{equation*}
where $d_{n-1}$ is a scalar. The Schur complement of $M$ with respect to $d_{n-1}$ is given by $M[n-1]=M_{n-1}-\frac{b_{n-1}c_{n-1}^\top}{d_{n-1}}$. For $k\in\{1,\dots,n-1\}$, define $M[k]$ iteratively as the Schur complement of $M[k+1]$ with respect to $d_k$, where $M[n]=M$, then the following statement holds.

\begin{lemma}[Necessary and sufficient condition based on the Schur complement {\cite{KSN-RS:10}}]\label{lemma:schurcomplements}
	A Metzler matrix $M\in\real^{n\times n}$ is Hurwitz if and only if for all $k\in\{1,\dots,n\}$, all the diagonal elements of $M[k]$ are negative.
\end{lemma}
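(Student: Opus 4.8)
The plan is to reduce the whole statement to a single two-sided reduction step and then iterate it by induction on $n$. Adopt the block partition of $M$ introduced just before the statement, and abbreviate $A=M_{n-1}$, $b=b_{n-1}$, $c=c_{n-1}$, $d=d_{n-1}=m_{nn}$; since $M$ is Metzler we have $b,c\geq\mathbb{0}_{n-1}$ and $A$ Metzler. I would first record the elementary fact that \emph{whenever $d<0$ the Schur complement $M[n-1]=A-bc^\top/d$ is again Metzler}, since its off-diagonal entries are $m_{ij}+(b_ic_j)/|d|\geq 0$. The core of the argument is the claim that, \emph{for a Metzler $M$ with $d=m_{nn}<0$, $M$ is Hurwitz if and only if $M[n-1]$ is Hurwitz}. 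I would prove both directions through the equivalence \ref{p1:hur}$\Leftrightarrow$\ref{p4:right} of Lemma~\ref{lemma:equivalentchar}, that is, by producing strictly positive vectors $\xi$ with $M\xi<\mathbb{0}_n$.

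For the forward direction I take $\xi=(\xi'^\top,\xi_n)^\top>\mathbb{0}_n$ with $M\xi<\mathbb{0}_n$, read off $A\xi'+b\,\xi_n<\mathbb{0}_{n-1}$ and $c^\top\xi'+d\,\xi_n<0$, and check that the same $\xi'>\mathbb{0}_{n-1}$ certifies Hurwitzness of $M[n-1]$: the scalar inequality gives $-(c^\top\xi')/d<\xi_n$, and because $b\geq\mathbb{0}_{n-1}$ this yields $M[n-1]\xi'=A\xi'+b\,(-(c^\top\xi')/d)\leq A\xi'+b\,\xi_n<\mathbb{0}_{n-1}$, so $M[n-1]$ is Hurwitz by \ref{p4:right}.

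The converse is the delicate step, and I expect it to be the main obstacle. Starting from a certificate $\zeta>\mathbb{0}_{n-1}$ with $M[n-1]\zeta<\mathbb{0}_{n-1}$, I would keep $\xi'=\zeta$ and search for a scalar $\xi_n>0$ making both $A\zeta+b\,\xi_n<\mathbb{0}_{n-1}$ and $c^\top\zeta+d\,\xi_n<0$ hold. Writing $\gamma=c^\top\zeta\geq0$ and $L=\gamma/|d|$, the scalar inequality forces $\xi_n>L$, while a naive bound on the vector inequality seems to force $\xi_n\leq L$; the resolution is a componentwise analysis. From $M[n-1]\zeta<\mathbb{0}_{n-1}$ one gets $(A\zeta)_i<-L\,b_i$ for every $i$, so the $i$-th constraint of $A\zeta+b\,\xi_n<\mathbb{0}_{n-1}$ is vacuous when $b_i=0$ and reads $\xi_n<-(A\zeta)_i/b_i$ when $b_i>0$, with each such upper threshold \emph{strictly} larger than $L$. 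Hence the admissible set for $\xi_n$ is a nonempty interval $(L,U)$ with $U>L\geq 0$; any choice there gives $\xi=(\zeta^\top,\xi_n)^\top>\mathbb{0}_n$ with $M\xi<\mathbb{0}_n$, so $M$ is Hurwitz. I would emphasize that the strict inequality $U>L$, which is exactly what keeps the interval nonempty, is the crux of the proof.

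Finally I would assemble the lemma by induction on $n$, the case $n=1$ being immediate. I first note, again from \ref{p4:right}, that a Hurwitz Metzler matrix has strictly negative diagonal: evaluating $M\xi<\mathbb{0}_n$ in coordinate $i$ and using $m_{ij}\geq0$ and $\xi_j>0$ gives $m_{ii}\xi_i<0$. For sufficiency, if every $M[k]$ has negative diagonal then in particular $d=m_{nn}<0$, so $M[n-1]$ is Metzler and its own iterated Schur complements are precisely $M[n-1],\dots,M[1]$; the inductive hypothesis makes $M[n-1]$ Hurwitz, and the claim upgrades this to $M$ Hurwitz. For necessity, $M$ Hurwitz forces $m_{nn}<0$, the claim makes $M[n-1]$ Hurwitz and Metzler, and the inductive hypothesis applied to $M[n-1]$ gives negative diagonals for $M[n-1],\dots,M[1]$, which together with the negative diagonal of $M=M[n]$ is the assertion. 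Throughout, the negativity of the successive pivots guarantees that all Schur complements are well defined.
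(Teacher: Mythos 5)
Your proof is correct, but note that the paper itself never proves this lemma: it is stated as a quoted result with a citation to \cite{KSN-RS:10}, so there is no in-paper argument to compare against, and what you have produced is a self-contained replacement for that external reference. Your route is fully consistent with the paper's own toolkit: everything is driven by the positive-vector characterization, Lemma~\ref{lemma:equivalentchar}\ref{p4:right}, applied on both sides of a single reduction step ($M$ Hurwitz $\Leftrightarrow$ $M[n-1]$ Hurwitz when $m_{nn}<0$), followed by induction. The two nontrivial points are handled properly: first, that $M[n-1]$ is again Metzler when the pivot is negative, which is what licenses reusing Lemma~\ref{lemma:equivalentchar} at the next level; second, the converse direction, where the strict inequality $(A\zeta)_i<-Lb_i$ coming from $M[n-1]\zeta<\mathbb{0}_{n-1}$ is exactly what makes each upper threshold $-(A\zeta)_i/b_i$ strictly exceed $L=c^\top\zeta/|d|$, so the admissible interval for $\xi_n$ is nonempty (including the degenerate case $b=\mathbb{0}_{n-1}$, where it is $(L,\infty)$). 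The induction bookkeeping is also right: the iterated Schur complements of $M[n-1]$ coincide with $M[n-1],\dots,M[1]$ by the recursive definition given before the lemma, and negativity of successive pivots keeps every complement well defined. One stylistic remark: your argument implicitly reproves Corollary~\ref{coro:negativediagonal} (negative diagonal of a Hurwitz Metzler matrix) as an ingredient, which is harmless since that corollary is stated in the paper as a consequence of this very lemma, and your derivation of it from Lemma~\ref{lemma:equivalentchar}\ref{p4:right} is independent, so no circularity arises.
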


By Lemma~\ref{lemma:schurcomplements}, we have the following necessary condition.
\begin{corollary}[Negativity of diagonal elements {\cite{KSN-RS:10}}]\label{coro:negativediagonal}
	If a Metzler matrix $M\in\real^{n\times n}$ is Hurwitz, then all the diagonal elements of $M$ are negative.
\end{corollary}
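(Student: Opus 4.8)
The plan is to read the claim off directly from Lemma~\ref{lemma:schurcomplements}. The crucial observation is that the iteratively defined Schur complements satisfy $M[n]=M$ by construction, since the recursion is initialized precisely at the full matrix $M[n]=M$. Hence $M$ itself is one of the matrices in the family $\{M[k]\}_{k=1}^n$, namely the member with $k=n$.

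First I would invoke Lemma~\ref{lemma:schurcomplements}: since $M$ is assumed Hurwitz, the lemma guarantees that for every $k\in\{1,\dots,n\}$ all diagonal entries of $M[k]$ are strictly negative. Specializing to $k=n$ and using $M[n]=M$, every diagonal entry of $M$ is negative, which is exactly the claim. Because the argument is a direct specialization, there is essentially no real obstacle; the only point to be careful about is the indexing convention in the Schur complement recursion, i.e.\ confirming that it is $M[n]$ (and not, say, $M[1]$) that equals the full matrix $M$, so that no additional reduction step is needed.

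As an independent check that avoids the Schur complement machinery, I would instead argue from Lemma~\ref{lemma:equivalentchar}\ref{p3:minor}. Fix any index $i$ and conjugate $M$ by the permutation matrix $P$ that swaps indices $1$ and $i$. The resulting matrix $\widetilde{M}=P^\top M P$ is again Metzler, since a simultaneous permutation of rows and columns maps diagonal entries to diagonal entries and off-diagonal entries to off-diagonal entries, preserving the sign pattern; it is again Hurwitz, since $P$ is orthogonal and $\widetilde{M}$ is therefore similar to $M$; and its leading entry satisfies $\widetilde{m}_{11}=m_{ii}$. Applying the positivity of the first leading principal minor of $-\widetilde{M}$ yields $-\widetilde{m}_{11}>0$, i.e.\ $m_{ii}<0$, and since $i$ is arbitrary all diagonal entries of $M$ are negative. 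Either route establishes the corollary; I would present the first, as it matches the statement's explicit reliance on Lemma~\ref{lemma:schurcomplements} and is the shortest.
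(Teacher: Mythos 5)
Your proposal is correct and your primary route is exactly the paper's own argument: the paper derives the corollary directly from Lemma~\ref{lemma:schurcomplements} by noting that $M[n]=M$, so the Hurwitz hypothesis forces negative diagonal entries of $M$ itself. Your alternative via Lemma~\ref{lemma:equivalentchar}\ref{p3:minor} and permutation similarity is also sound, but the first, which you chose to present, is the one the paper uses.
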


\section{Review of ISS, interconnected systems and ISS gains}\label{sec:ISS}
We review the concepts of input-to-state stability and introduce the gain functions in two different forms for interconnected input-to-state stable systems \cite{TL-ZJP-DJH:14,SND-BSR-FRW:10}.

\subsection{Input-to-state stability}
Consider the system
\begin{equation}\label{eq:generalnonlinear}
\dot{x}=f(x,u),
\end{equation}
where $x\in\real^{N}$ is the state, $u\in\real^m$ is the input, and
$f:\real^{N}\times\real^{m}\mapsto \real^{N}$ is a locally Lipschitz
function and satisfies $f(\vect{0}_N,\vect{0}_m) = \vect{0}_N$. Then, we
have the following definition for input-to-state stability.
\begin{definition}[Input-to-state stability {\cite[Definition 2.1]{EDS:89}}]\label{def:ISS}
	System~\eqref{eq:generalnonlinear} is input-to-state stable if there
	exist $\beta\in\mathcal{KL}$ and $\gamma\in\mathcal{K}$ such that for any
	initial state $x(0) = x_0$ and any measurable and locally essentially
	bounded input $u$, the solution $x(t)$ satisfies, for all $t \geq 0$,
	\begin{equation}\label{eq:sumtype}
	|x(t)| \leq \max\{\beta(|x_0|, t), \gamma(\|u\|_{\infty})\}.
	\end{equation}
\end{definition}
The class $\mathcal{K}$ function $\gamma$ in \eqref{eq:sumtype} is the
\emph{ISS gain} of the system.

\begin{remark}[ISS Lyapunov function]
	To verify ISS using Definition~\ref{def:ISS}, we need to find an
	estimate for the trajectory of the system, which is computationally hard
	in general, if not impossible. However, one can show that ISS is
	equivalent to the existence of an ISS Lyapunov function. We refer the
	interested readers to~\cite[Theorem 1]{EDS-YW:95}.
\end{remark}

\subsection{Interconnection, ISS gains, and cyclic small-gain
	theorem} In this subsection, we study input-to-state stability for networked
interconnected systems. Suppose the interaction between subsystems is
described by a directed graph $\mathcal{G}=(V,\mathcal{E})$, where $V=\{1,\dots,n\}$
is the set of nodes and for all $i,j\in V$ and $i\neq j$,
$(j,i)\in\mathcal{E}$ if $x_j$ is an input to subsystem $i$. We consider a
network of $n$ interconnected dynamical systems with the interconnection
graph $\mathcal{G}$:
\begin{equation}\label{eq:interconnected}
\dot{x}_i = f_i (x_i , x_{\mathcal{N}_i},u_i),\quad \textup{for all } i\in\{1,\ldots,n\},
\end{equation}
where $x_i\in\real^{n_i}$ and $x_{\mathcal{N}_i}=\begin{bmatrix}x_{i_1},\ldots,x_{i_{k_i}}
\end{bmatrix}^\top\in\real^{n_{\mathcal{N}_i}}$
with $\mathcal{N}_i=\{i_1,\ldots,i_{k_i}\}$ and
$n_{\mathcal{N}_i}=\sum_{j=1}^{k_i}n_{i_j}$.  For every $i\in V$, the
function $\map{f_i}{\real^{n_i+n_{\mathcal{N}_i}+m_i}}{\real^{n_i}}$ is
locally Lipschitz satisfying
$f_i(\vect{0}_{n_i},\vect{0}_{n_{\mathcal{N}_i}},\vect{0}_{m_i})=\vect{0}_{n_i}$. For
the interconnected system~\eqref{eq:interconnected}, it is desirable to
study ISS of the interconnection using the ISS of each subsystem. We first
introduce componentwise ISS for network systems.

\begin{definition}[Componentwise ISS]
	An interconnected system~\eqref{eq:interconnected} is componentwise
	ISS if every subsystem $i$ is ISS for the input $\begin{bmatrix}x_{\mathcal{N}_i}&u_i\end{bmatrix}^{\top}\in \real^{n_{\mathcal{N}_i}+m_i}$.
\end{definition}

In other words, an interconnected network system is
componentwise ISS if each subsystem, separated from the whole system, is
ISS. In general, componentwise ISS does
not guarantee ISS of the whole interconnected system, and conditions
on the interconnection structure and composition of suitable gains are required to
ensure ISS of the whole system. In the following, we introduce two notions of
gains.%, which are based on the interconnections between subsystems

\begin{definition}[Max-interconnection ISS gains]
	Consider the interconnected system~\eqref{eq:interconnected}. The family
	of functions $\{\Psi_{ij}\}\in \mathcal{K}\cup \{0\}$ is a
	max-interconnection gain if, for every $i\in \until{n}$, there exists
	$\beta_i\in \mathcal{KL}$ and $\Psi_i\in \mathcal{K}$ such that for
	any initial state $x(0) = x_0$, and any measurable and locally
	essentially bounded inputs $u_i$, the
	solution $x_i(t)$ satisfies, for all $t \geq 0$,
	\begin{equation*}%\label{eq:sumtype_general}
	|x_i(t)| \leq \max_{j\in\mathcal{N}_i}\{\beta_i(|x_i(0)|, t), \Psi_{ij}(\|x_j\|_{[0,t]} ),\Psi_i(\|u_i\|_{\infty})\}.
	\end{equation*}
\end{definition}

\begin{definition}[Sum-interconnection ISS gains]
	Consider the interconnected system~\eqref{eq:interconnected}. The family
	of functions $\{\Gamma_{ij}\}\in \mathcal{K}\cup \{0\}$ is a
	sum-interconnection gain if, for every $i\in \until{n}$, there exists
	$\beta_i\in \mathcal{KL}$ and $\Gamma_i\in \mathcal{K}$ such that for
	any initial state $x(0) = x_0$, and any measurable and locally
	essentially bounded inputs $u_i$, the
	solution $x_i(t)$ satisfies, for all $t \geq 0$,
	\begin{equation*}%\label{eq:sumtype_general}
	|x_i(t)| \leq \beta_i(|x_i(0)|, t)+ \sum_{j\in\mathcal{N}_i}\Gamma_{ij}(\|x_j\|_{[0,t]})+\Gamma_i(\|u_i\|_{\infty}).
	\end{equation*}
\end{definition}

The following lemma provides conditions on a set of max-interconnection ISS gains which guarantee ISS of the interconnected
system~\eqref{eq:interconnected}.

\begin{lemma}[Cyclic small-gain theorem {\cite[Theorem 3.2]{TL-ZJP-DJH:14}}]\label{lemma:SGG}
	Consider an interconnected system~\eqref{eq:interconnected} where each
	subsystem $i$ is componentwise ISS and has a family of
	max-interconnected gains $\{\Psi_{ij}\}$. The interconnected
	system~\eqref{eq:interconnected} is ISS with $x$ as the state and $u$ as
	the input if, for every simple cycle $c=(i_1,i_2,\dots,i_{k},i_1)$ in the
	interconnection graph $\mathcal{G}$ and every $s>0$,
	\begin{equation}\label{eq:gaincompo}
	%\Psi_{c}(s) =
	\Psi_{i_2i_1} \circ \Psi_{i_3i_2} \circ \dots \circ \Psi_{i_1i_k}(s)<s,
	\end{equation}
	where $\circ$ is the function composition.
\end{lemma}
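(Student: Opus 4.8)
The plan is to give a trajectory-based proof that works directly with the max-interconnection estimates and the cyclic gain condition: I would establish a uniform bound and an asymptotic (attractivity) bound for the aggregate state $x$, and then invoke a standard characterization of input-to-state stability to produce the $\beta\in\mathcal{KL}$ and $\gamma\in\mathcal{K}$ required by \eqref{eq:sumtype}. Throughout, write $X_i(t)=\|x_i\|_{[0,t]}$ and collect the subsystem gains into the monotone operator $\mathcal{T}:\realnonnegative^n\to\realnonnegative^n$, $\mathcal{T}(v)_i=\max_{j\in\mathcal{N}_i}\Psi_{ij}(v_j)$.

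First I would derive a finite-horizon bound. On any interval $[0,t]$ on which the solution exists, taking the essential supremum of the max-interconnection estimate over $[0,t]$ and using causality of the signal norms gives, for every $i$,
\begin{equation*}
X_i(t)\le\max\Big\{w_i,\ \max_{j\in\mathcal{N}_i}\Psi_{ij}(X_j(t))\Big\},\qquad w_i:=\max\{\beta_i(|x_i(0)|,0),\ \Psi_i(\|u\|_\infty)\},
\end{equation*}
i.e. $X(t)\le\max\{w,\mathcal{T}(X(t))\}$ componentwise. The key step is to convert this \emph{implicit} inequality into an \emph{explicit} bound $X_i(t)\le\rho_i(\max_k w_k)$ with $\rho_i\in\mathcal{K}$. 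I would do this by repeatedly substituting the inequality into itself: since each $\Psi_{ij}$ is increasing, $\Psi_{ij}(\max\{a,b\})=\max\{\Psi_{ij}(a),\Psi_{ij}(b)\}$, so after $m$ substitutions $X_i(t)$ is bounded by a maximum, taken over directed walks of length $\le m$ emanating from $i$, of gain compositions applied either to a component of $w$ (a terminating walk) or to some $X_\ell(t)$ (a continuing walk). Any walk of length exceeding $n$ revisits a node and hence contains a simple cycle; by hypothesis the composition of gains around that cycle is strictly below the identity, and a composition of increasing maps each lying below the identity again lies below the identity, so every cyclic term is strictly dominated by the corresponding acyclic term obtained by deleting the cycle. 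Consequently the maximum is always attained by a simple-path term applied to $w$, the continuing terms are eliminated, and one obtains the explicit bound uniformly in $t$. This both shows forward completeness (the solution cannot escape in finite time) and supplies the uniform overshoot estimate.

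Next I would run the same argument with $\limsup_{t\to\infty}$ in place of $\|\cdot\|_{[0,t]}$. Applying the estimates on tails $[T,t]$ and letting $T\to\infty$ kills the $\beta_i$ term because $\beta_i(\cdot,t)\to0$, leaving $\bar X_i\le\max\{\max_{j\in\mathcal{N}_i}\Psi_{ij}(\bar X_j),\Psi_i(\|u\|_\infty)\}$ with $\bar X_i:=\limsup_{t\to\infty}|x_i(t)|$; the same cycle-cutting argument yields $\bar X_i\le\rho_i(\Psi(\|u\|_\infty))$, which is the asymptotic gain from the input and forces $\bar X_i=0$ when $u\equiv\vect{0}_m$. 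Finally, a uniform bound together with this asymptotic-gain/attractivity estimate is equivalent to ISS of the aggregate system by the standard characterizations of input-to-state stability \cite{EDS-YW:95}, which produces the desired $\beta$ and $\gamma$.

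I expect the main obstacle to be the combinatorial cycle-cutting step that resolves the implicit inequality $X(t)\le\max\{w,\mathcal{T}(X(t))\}$: one must argue rigorously that the strict cycle contraction forces the iterated maximum to stabilize on simple-path terms and that the supremum over the (a priori unboundedly refinable) family of walks is attained, rather than merely approached. An alternative route avoids this by the Lyapunov method, constructing an $\Omega$-path $\sigma=(\sigma_1,\dots,\sigma_n)$ of class-$\mathcal{K}_\infty$ functions with $\mathcal{T}(\sigma(r))<\sigma(r)$ for all $r>0$ and forming the aggregate ISS-Lyapunov function $V(x)=\max_i\sigma_i^{-1}(V_i(x_i))$; but there the existence of the $\Omega$-path, which is precisely the operator reformulation of the cyclic condition, becomes the corresponding hard step.
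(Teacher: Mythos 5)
The paper offers no proof of this lemma to compare against: Lemma~\ref{lemma:SGG} is imported verbatim from \cite[Theorem 3.2]{TL-ZJP-DJH:14} and used as a black box, so your argument can only be judged on its own terms. On those terms, your outline follows the standard trajectory-based route and can be completed, but it has one genuine gap, precisely at the step you flag yourself. Iterating $X(t)\le\max\{w,\mathcal{T}(X(t))\}$ and deleting cycles does \emph{not} eliminate the ``continuing terms'': cycle deletion dominates a walk containing a simple cycle by the shorter walk with that cycle removed, but the dominating term still ends in some $X_\ell(t)$, not in a component of $w$. Hence after any finite number of substitutions you retain implicit terms of the form (simple-path composition applied to $X_\ell(t)$), and nothing in the argument as stated makes them disappear; making them vanish as the iteration depth grows amounts to proving $\mathcal{T}^m(v)\to\vect{0}_n$ for every fixed $v$, which is a nontrivial theorem about max-type monotone operators under the cyclic condition, not a one-line observation.

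There is a short, purely finite repair that replaces the unbounded iteration by an argmax selection. On the maximal interval of existence each $X_i(t)=\|x_i\|_{[0,t]}$ is finite, and each inequality $X_i(t)\le\max\{w_i,\max_{j\in\mathcal{N}_i}\Psi_{ij}(X_j(t))\}$ is a maximum of finitely many terms, hence attained: either $X_i(t)\le w_i$ (call $i$ terminal) or $X_i(t)\le\Psi_{ij}(X_j(t))$ for some $j\in\mathcal{N}_i$ (draw a pointer $i\to j$). Following pointers from any node, either you reach a terminal node within $n$ steps, and monotonicity of the gains bounds $X_i(t)$ by a simple-path composition applied to a component of $w$; or the pointers close a simple cycle, in which case chaining the inequalities around it gives $X_{i_1}(t)\le C(X_{i_1}(t))$, where $C$ is a cyclic rotation of the composition in \eqref{eq:gaincompo} (and rotations inherit the property $C(s)<s$ for $s>0$, since $F\circ G<\mathrm{id}$ implies $G\circ F<\mathrm{id}$ for increasing maps vanishing at zero); this forces $X_{i_1}(t)=0$ and again yields the path bound. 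Either way $X_i(t)\le\rho_i(\max_k w_k)$ with $\rho_i$ a finite maximum of class-$\mathcal{K}$ compositions over simple paths, uniformly in $t$, which gives forward completeness and your overshoot estimate, and the identical selection argument resolves the $\limsup$ inequality in your asymptotic-gain step. Two smaller repairs: the tail estimate in that step needs the uniform bound to be established \emph{first}, so that $\beta_i(|x_i(T)|,t-T)$ can be majorized uniformly in $T$; and the implication ``uniform bound plus asymptotic gain implies ISS'' is Sontag and Wang's 1996 characterization result, not the 1995 ISS-Lyapunov paper \cite{EDS-YW:95} you cite. With these fixes your plan closes; your alternative $\Omega$-path route is also viable, and, as you note, there the existence of the $\Omega$-path is exactly the hard lemma.
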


\subsection{ISS gains for Metzler systems}
In this subsection, we characterize the ISS gains for Metzler systems. Consider the continuous-time linear system
\begin{align}\label{eq:metzler}
\dot{x} = M x + u,
\end{align}
where $M\in\real^{n\times n}$ is a Metzler matrix and $u\in \realnonnegative^n$ is the control input. The Metzler
system~\eqref{eq:metzler} can be viewed as a network of $n$ interconnected scalar
systems, where the interconnection is characterized by the digraph
$\mathcal{G}(M)$. More specifically, one can write the Metzler
system~\eqref{eq:metzler} in the interconnection form~\eqref{eq:interconnected} as,
\begin{equation}\label{eq:Metzlersubsys}
\dot{x}_i = m_{ii}x_i + \sum_{j\in\mathcal{N}_i}
m_{ij}x_j + u_i,\quad \textup{for all } i\in \until{n}.
\end{equation}

We characterize the sum-interconnection and max-interconnection ISS gains for the Metzler system
\eqref{eq:metzler} in the following lemma. Some parts of Lemma~\ref{lemma:ISS-gains-Metzler} may be known in the literature, and we hereby provide self-contained proofs.

\begin{lemma}[ISS Metzler systems]\label{lemma:ISS-gains-Metzler}
	The Metzler system~\eqref{eq:metzler} with interconnection digraph
	$\mathcal{G}(M)=(V,\mathcal{E},M)$
	\begin{enumerate}
		\item \label{p0:component} is componentwise ISS if and only if
		\begin{align*}
		m_{ii}<0,\quad \textup{for all } i\in\until{n};
		\end{align*}

		\item\label{p1:sum-inter} has sum-interconnection gains $\{s \mapsto
		\Gamma_{ij}(s) = \gamma_{ij}s \}$, if it is componentwise ISS and the set of scalars
		$\{\gamma_{ij}\}$ satisfies $\gamma_{ij}=0$ for all $j\notin \mathcal{N}_i$ and
		\begin{equation}\label{eq:sumgain_M}
		\frac{m_{ij}}{-m_{ii}}\le \gamma_{ij}, \quad \textup{for all } i\in \until{n},~j\in\mathcal{N}_i;
		\end{equation}
		
		\item\label{p2:max-inter} has max-interconnection gains $\{s \mapsto
		\Psi_{ij}(s) = \psi_{ij}s \}$, if it is componentwise ISS and
		the set of scalars $\{\psi_{ij}\}$ satisfies $\psi_{ij}=0$ for all $j\notin \mathcal{N}_i$ and
		\begin{equation}\label{eq:maxgain_M}
		\sum_{j\in\mathcal{N}_i} \left(\frac{m_{ij}}{-m_{ii}}\right)
		\psi^{-1}_{ij}< 1 ,\quad\textup{for all } i\in \until{n};
		\end{equation}
		
		\item \label{p3:ISS} is ISS if and only if $M$ is Hurwitz.
	\end{enumerate}
\end{lemma}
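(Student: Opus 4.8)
The plan is to establish the four parts in order, since each leans on the one before it. The single tool underlying parts~\ref{p0:component}--\ref{p2:max-inter} is the variation-of-constants formula for the scalar subsystems~\eqref{eq:Metzlersubsys}, together with the Metzler sign property $m_{ij}\ge 0$ for $j\in\mathcal{N}_i$. For part~\ref{p0:component}, I would regard each subsystem $\dot x_i = m_{ii}x_i + v_i$ as a scalar linear system with aggregate input $v_i = \sum_{j\in\mathcal{N}_i}m_{ij}x_j + u_i$, whose solution is $x_i(t)=e^{m_{ii}t}x_i(0)+\int_0^t e^{m_{ii}(t-s)}v_i(s)\,ds$. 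If $m_{ii}<0$, the homogeneous term decays and $\int_0^t e^{m_{ii}(t-s)}\,ds\le \tfrac{1}{-m_{ii}}$ bounds the forced term, which yields the componentwise ISS estimate; if instead $m_{ii}\ge 0$, then already with $v_i\equiv 0$ the state fails to decay, so the subsystem cannot be ISS. This proves the equivalence.

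For part~\ref{p1:sum-inter}, I would take absolute values in the same formula and exploit $m_{ij}\ge 0$ to obtain $|x_i(t)|\le e^{m_{ii}t}|x_i(0)|+\int_0^t e^{m_{ii}(t-s)}\big(\sum_{j\in\mathcal{N}_i}m_{ij}|x_j(s)|+|u_i(s)|\big)\,ds$. Replacing $|x_j(s)|$ by $\|x_j\|_{[0,t]}$ and $|u_i(s)|$ by $\|u_i\|_\infty$, and using $\int_0^t e^{m_{ii}(t-s)}\,ds\le\tfrac{1}{-m_{ii}}$, gives a bound of the form $|x_i(t)|\le \beta_i(|x_i(0)|,t)+\sum_{j\in\mathcal{N}_i}\tfrac{m_{ij}}{-m_{ii}}\|x_j\|_{[0,t]}+\tfrac{1}{-m_{ii}}\|u_i\|_\infty$ with $\beta_i(r,t)=e^{m_{ii}t}r\in\mathcal{KL}$. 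Since $\gamma_{ij}\ge\tfrac{m_{ij}}{-m_{ii}}$ by~\eqref{eq:sumgain_M}, this is precisely the sum-interconnection bound with $\Gamma_{ij}(s)=\gamma_{ij}s$ and $\Gamma_i(s)=\tfrac{1}{-m_{ii}}s$.

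Part~\ref{p2:max-inter} is where the work lies: I would convert the sum bound of part~\ref{p1:sum-inter} into a max bound via Lemma~\ref{lemma:boundssummax}. Assigning a positive weight to each of the $|\mathcal{N}_i|+2$ summands and choosing the weight $\alpha_j$ of the $j$-th interconnection term so that $\alpha_j\tfrac{m_{ij}}{-m_{ii}}=\psi_{ij}$ forces $\tfrac{1}{\alpha_j}=\tfrac{m_{ij}}{-m_{ii}}\psi_{ij}^{-1}$; condition~\eqref{eq:maxgain_M} then reads exactly $\sum_{j\in\mathcal{N}_i}\tfrac{1}{\alpha_j}<1$. The \emph{strictness} of this inequality is essential: it leaves a positive residual $1-\sum_j\tfrac{1}{\alpha_j}$ that I can split between the $\beta_i$ term and the input term so that all reciprocal weights sum to at most $1$, making Lemma~\ref{lemma:boundssummax} applicable. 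The lemma then bounds the sum by the maximum of the weighted summands, and absorbing the scaling constants into a new $\mathcal{KL}$ function and a new $\mathcal{K}$ input gain produces the max-interconnection form with $\Psi_{ij}(s)=\psi_{ij}s$. The main obstacle is exactly this simultaneous bookkeeping---hitting the prescribed coefficients $\psi_{ij}$ while reserving room for the homogeneous and input terms---and the strict inequality~\eqref{eq:maxgain_M} is what makes it feasible.

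For part~\ref{p3:ISS}, I would appeal to standard linear-systems theory rather than the graph structure. If $M$ is Hurwitz, then $\|e^{Mt}\|\le Ke^{-\lambda t}$ for some $K,\lambda>0$, and the variation-of-constants formula for~\eqref{eq:metzler} yields a bound that, after converting a sum into a maximum, has the ISS form~\eqref{eq:sumtype}. Conversely, if~\eqref{eq:metzler} is ISS, then setting $u\equiv 0$ shows the origin is globally asymptotically stable for $\dot x=Mx$, which for a linear system is equivalent to $M$ being Hurwitz. This closes the chain of equivalences.
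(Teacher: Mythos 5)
Your proposal is correct and follows essentially the same route as the paper: variation of constants for the scalar subsystems in parts~\ref{p0:component} and~\ref{p1:sum-inter}, Lemma~\ref{lemma:boundssummax} with the reciprocal-weight bookkeeping (exploiting the strictness of~\eqref{eq:maxgain_M} to reserve slack for the homogeneous and input terms) for part~\ref{p2:max-inter}, and standard linear ISS theory for part~\ref{p3:ISS}. The only cosmetic differences are that you apply Lemma~\ref{lemma:boundssummax} once to the full flat sum where the paper nests it (three grouped terms, then within the interconnection group), and you spell out the scalar-ISS and Hurwitz-equivalence arguments that the paper dispatches by citation to~\cite[Theorem 1.3]{TL-ZJP-DJH:14}.
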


\begin{proof}
	Regarding part~\ref{p0:component}, since the dynamics of the $i$th subsystem given by~\eqref{eq:Metzlersubsys} is linear, it is ISS if
	and only if $m_{ii}<0$~\cite[Theorem 1.3]{TL-ZJP-DJH:14}. Therefore, the Metzler system~\eqref{eq:metzler} is componentwise ISS if and only if, for   every $i\in \until{n}$, we have $m_{ii}<0$.

	Regarding part~\ref{p1:sum-inter}, the state trajectory $x_i(t)$ satisfies
	\begin{equation*}
	x_i(t) = e^{m_{ii}t}x_i(0)  +\sum_{j\in\mathcal{N}_i} m_{ij}\int_{0}^t
	e^{m_{ii}(t-\tau)}x_j(\tau)d\tau + \int_{0}^t e^{m_{ii}(t-\tau)}u_i(\tau)d\tau,
	\end{equation*}
	which implies
	\begin{align}\label{eq:trajec}
	\begin{split}
	|x_i(t)| &\leq  e^{m_{ii}t}|x_i(0)| + \sum_{j\in\mathcal{N}_i} m_{ij}\int_{0}^t
	|e^{m_{ii}(t-\tau)}x_j(\tau)|d\tau + \int_{0}^t |e^{m_{ii}(t-\tau)}u_i(\tau)|d\tau\\
	&\leq  e^{m_{ii}t}|x_i(0)| + \sum_{j\in\mathcal{N}_i}m_{ij}\|x_j\|_{[0,t]}\int_{0}^te^{m_{ii}(t-\tau)}d\tau+\|u_i\|_{\infty}\int_{0}^te^{m_{ii}(t-\tau)}d\tau\\
	&\leq e^{m_{ii}t}|x_i(0)| + \sum_{j\in\mathcal{N}_i} \frac{m_{ij}}{-m_{ii}}\|x_j\|_{[0,t]}+\frac{1}{-m_{ii}}\|u_i\|_{\infty}.
	\end{split}
	\end{align}
	Therefore, the Metzler system~\eqref{eq:metzler} has
	sum-interconnection ISS gains $\{s\mapsto \Gamma_{ij}(s) =
	\gamma_{ij}(s)\}$ if we have $\frac{m_{ij}}{-m_{ii}}\le \gamma_{ij}$.

	Regarding part~\ref{p2:max-inter}, by~Lemma~\ref{lemma:boundssummax} and \eqref{eq:trajec}, we have
	\begin{equation}\label{eq:maxineq}
	|x_i(t)|\leq \max\{\alpha_1 e^{m_{ii}t}|x_i(0)|,
	\alpha_2\sum_{j\in\mathcal{N}_i} \frac{m_{ij}}{-m_{ii}}\|x_j\|_{[0,t]},\alpha_3\frac{1}{-m_{ii}}\|u_i\|_{\infty}\},
	\end{equation}
	where $\alpha_1,\alpha_2,\alpha_3>0$ and $\sum_{i=1}^3\frac{1}{\alpha_i}\leq1$. If \eqref{eq:maxgain_M} holds, then by~Lemma~\ref{lemma:boundssummax}, we have
	\begin{align*}
	\sum_{j\in\mathcal{N}_i} \frac{m_{ij}}{-m_{ii}}\|x_j\|_{[0,t]}<\max_{j}\{\psi_{ij}\|x_j\|_{[0,t]}\}.
	\end{align*}
	Therefore, we can pick $\alpha_2>1$ properly such that
	\begin{align*}
	\sum_{j\in\mathcal{N}_i} \frac{m_{ij}}{-m_{ii}}\|x_j\|_{[0,t]}\leq \frac{1}{\alpha_2}\max_{j}\{\psi_{ij}\|x_j\|_{[0,t]}\},
	\end{align*}
	which combined with~\eqref{eq:maxineq} imply that $\{\psi_{ij}\}$ are max-interconnection gains.
	
	Regarding part~\ref{p3:ISS}, this is a straightforward application of~\cite[Theorem 1.3]{TL-ZJP-DJH:14}.
\end{proof}

\section{Graph-theoretic conditions for Hurwitzness of Metzler
	matrices}\label{sec:main}
In this section, we first show that we only need to consider irreducible Metzler matrices. Then, we
show that different ISS gains result in different
graph-theoretic conditions for the stability of Metzler
systems. In particular, if we use the max-interconnection ISS gains,
then the cycle condition~\eqref{eq:gaincompo} in Lemma~\ref{lemma:SGG} is a necessary and sufficient condition for the stability of
Metzler systems. On the other hand, if we use the sum-interconnection
ISS gains, then we can obtain new necessary and sufficient graph-theoretic conditions.

\subsection{Metzler matrices with reducible graphs}
The following lemma allows us to restrict our attention to irreducible Metzler matrices.
\begin{lemma}[Hurwitzness and strongly connected components]\label{lemma:condensation}
	For a Metzler matrix $M\in\real^{n\times n}$, $M$ is Hurwitz if and only if all the submatrices corresponding to the strongly connected components of $\mathcal{G}(M)$ are Hurwitz.
\end{lemma}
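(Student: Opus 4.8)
The plan is to bring $M$ into block triangular form via a permutation similarity induced by the condensation of $\mathcal{G}(M)$, and then read off the spectrum from the diagonal blocks.

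First I would recall that contracting each strongly connected component of $\mathcal{G}(M)$ to a single node produces the \emph{condensation}, which is a directed acyclic graph. Any acyclic digraph admits a topological ordering of its nodes; accordingly I would label the strongly connected components $C_1,\dots,C_p$ so that every edge of the condensation runs from a lower-indexed to a higher-indexed component. Concatenating the node indices of $C_1$, then $C_2$, and so on defines a permutation of $\until{n}$, realized by a permutation matrix $P$.

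Next I would pass from $M$ to $\widetilde M = P^\top M P$. Because $P$ is a permutation matrix, this is a similarity transformation, so $M$ and $\widetilde M$ have the same spectrum and hence $M$ is Hurwitz if and only if $\widetilde M$ is. Simultaneously permuting rows and columns also preserves the Metzler structure, and it carries the submatrices associated with $C_1,\dots,C_p$ onto the diagonal blocks of $\widetilde M$. I would then show that $\widetilde M$ is block lower triangular with diagonal blocks exactly these submatrices $M_{C_1},\dots,M_{C_p}$: a nonzero off-diagonal entry with row index in $C_b$ and column index in $C_a$ corresponds, by the edge convention $\mathcal{E}=\setdef{(j,i)}{a_{ij}\neq0}$, to an edge of $\mathcal{G}(M)$ from a node of $C_a$ to a node of $C_b$, hence to an edge $C_a\to C_b$ in the condensation; the topological ordering then forces $a\le b$, so no nonzero entries lie strictly above the block diagonal.

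Finally I would invoke the standard fact that the characteristic polynomial of a block triangular matrix factors as the product of the characteristic polynomials of its diagonal blocks. Thus the spectrum of $\widetilde M$ is the union of the spectra of $M_{C_1},\dots,M_{C_p}$, and every eigenvalue of $\widetilde M$ lies in the open left half-plane if and only if the same holds for each diagonal block. Therefore $M$ is Hurwitz if and only if every submatrix corresponding to a strongly connected component of $\mathcal{G}(M)$ is Hurwitz. The step requiring the most care is verifying the block triangular structure, namely relating the nonzero off-diagonal entries of $\widetilde M$ to edges of the condensation and invoking the topological order; the spectral factorization and the similarity invariance of eigenvalues are standard and need no further justification.
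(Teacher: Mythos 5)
Your proposal is correct and follows essentially the same route as the paper: a permutation similarity bringing $M$ into block triangular (Frobenius normal) form whose diagonal blocks are the strongly connected components, followed by the spectral factorization of a block triangular matrix. The paper merely asserts the existence of such a permutation, whereas you construct it explicitly via a topological ordering of the condensation and verify the block structure against the edge convention — a more detailed write-up of the same argument (your lower- versus the paper's upper-triangular form is just a choice of ordering).
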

\begin{proof}
	If $M$ is irreducible, then the statement holds true since there is only one strongly connected component in $\mathcal{G}(M)$, which is $\mathcal{G}(M)$ itself.
	
	If $M$ is reducible, then there exists a permutation matrix such that $M$ can be brought into block upper triangular form where each block on the diagonal corresponds to a strongly connected component of $\mathcal{G}(M)$. Therefore, $M$ is Hurwitz if and only if all the submatrices corresponding to the strongly connected components of $\mathcal{G}(M)$ are Hurwitz.
\end{proof}

If $\mathcal{G}(M)$ is acyclic, then we have the following corollary. 

\begin{corollary}[Necessary and sufficient condition for acyclic graphs {\cite[Theorem 3.4]{CB:17}}]
	For a Metzler matrix $M\in\real^{n\times n}$ whose associated digraph $\mathcal{G}(M)$ is acyclic, $M$ is Hurwitz if and only if all the diagonal elements of $M$ are negative.
\end{corollary}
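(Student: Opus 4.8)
The plan is to reduce the statement to the structural result already established in Lemma~\ref{lemma:condensation}, which asserts that $M$ is Hurwitz if and only if every submatrix associated with a strongly connected component of $\mathcal{G}(M)$ is Hurwitz. The crucial observation is that when $\mathcal{G}(M)$ is acyclic, each strongly connected component must consist of a single node, so all these submatrices are $1\times 1$ and the equivalence collapses to a condition on the diagonal entries.

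First I would argue that acyclicity forces every strongly connected component to be a singleton. Suppose, toward a contradiction, that some strongly connected component contained two distinct nodes $i$ and $j$. Then there would be a directed path from $i$ to $j$ and a directed path from $j$ back to $i$; concatenating these yields a directed closed walk passing through at least two distinct nodes, from which one can extract a simple cycle on $\geq 2$ nodes. This genuine simple cycle contradicts the assumption that $\mathcal{G}(M)$ is acyclic. Hence no strongly connected component has more than one node.

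Next, each strongly connected component is a single node $i$, and the submatrix it induces is the scalar $[m_{ii}]$. A scalar matrix $[m_{ii}]$ is Hurwitz if and only if $m_{ii}<0$. Applying Lemma~\ref{lemma:condensation}, $M$ is Hurwitz if and only if every such scalar block is Hurwitz, that is, if and only if $m_{ii}<0$ for all $i\in\until{n}$, which is exactly the claim. (Note that this argument delivers both implications simultaneously, so there is no need to invoke Corollary~\ref{coro:negativediagonal} separately for the necessity direction.)

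The only delicate point — and the one I would state with care — is the singleton-component argument, since it relies on the paper's convention that self-loops are not counted as simple cycles. Under this convention a node with $m_{ii}\neq 0$ still constitutes a valid (singleton) strongly connected component without violating acyclicity, so the presence of nonzero diagonal entries does not interfere with the reduction. Once the components are identified as singletons, the remainder of the proof is immediate.
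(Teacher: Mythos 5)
Your proof is correct and follows exactly the route the paper intends: the corollary is stated immediately after Lemma~\ref{lemma:condensation} precisely because acyclicity (under the paper's convention that self-loops are not simple cycles) forces every strongly connected component to be a singleton, reducing Hurwitzness to the scalar conditions $m_{ii}<0$. Your careful remark about self-loops is a welcome addition, but the argument is essentially the paper's own.
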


Hereafter, we focus on irreducible Metzler matrices with negative diagonal elements.

\subsection{Cycle gains and the case of a simple cycle}
In this subsection, we define the sum-cycle gains and max-cycle gains for Metzler matrices, and we emphasize the importance of cycles through the case of a simple cycle. Note that self loops are not considered as simple cycles in this paper.

\begin{definition}[Cycle gains for Metzler matrices]\label{def:cyclegains}
	Let $M\in\real^{n\times n}$ be an irreducible Metzler matrix with negative diagonal elements and $c=(i_1,i_2,\dots,i_{k},i_1)$ be a
	simple cycle in $\mathcal{G}(M)$. Then
	\begin{enumerate}
		\item a max-cycle gain of $c$ is
		\begin{equation}\label{eq:cycle_max}
		\psi_c=
		\left(\psi_{i_2i_1}\right)\left(\psi_{i_3i_2} \right)\dots \left(\psi_{i_1i_k}\right),
		\end{equation}
		where the scalars $\{\psi_{ij}\}$ satisfy \eqref{eq:maxgain_M}; and
		\item the sum-cycle gain of $c$ is
		\begin{equation}\label{eq:cycle_sum}
		\gamma_c=\left(\frac{m_{i_2i_1}}{-m_{i_2i_2}}\right) \left(\frac{m_{i_3i_2}}{-m_{i_3i_3}}\right)\dots \left(\frac{m_{i_1i_k}}{-m_{i_1i_1}}\right).
		\end{equation}
	\end{enumerate}
\end{definition}

\begin{remark}[Uniqueness of cycle gains]
	The max-interconnection gains and sum-interconnection gains as characterized in Lemma~\ref{lemma:ISS-gains-Metzler} are not unique. In Definition~\ref{def:cyclegains}, the max-cycle gains as in~\eqref{eq:cycle_max} are not unique, and for every solution of~\eqref{eq:maxgain_M}, one can compute a set of max-cycle gains for simple cycles. However, the sum-cycle gains in \eqref{eq:cycle_sum} are uniquely defined for simple cycles in $\mathcal{G}(M)$ since we pick the natural lower bound for the sum-interconnection gains in~\eqref{eq:sumgain_M}.
\end{remark}

For an irreducible Metzler matrix $M\in\real^{n\times n}$ with negative diagonal elements, if the associated digraph $\mathcal{G}(M)$ is a simple cycle, i,e, $M$ has the following structure,
\begin{equation*}
M=\begin{bmatrix}
m_{11}&m_{12}&0&\cdots&0\\
0&m_{22}&m_{23}&\cdots&0\\
\vdots&\vdots&\ddots&\ddots&\vdots\\
0&0&\cdots&m_{n-1,n-1}&m_{n-1,n}\\
m_{n1}&0&\cdots&0&m_{nn}
\end{bmatrix},
\end{equation*}
then we have the following lemma.

\begin{lemma}[Necessary and sufficient condition for simple cycles]\label{lemma:simplecycle}
	Let $M\in\real^{n\times n}$ be an irreducible Metzler matrix with negative diagonal elements whose associated digraph $\mathcal{G}(M)$ is a simple cycle
	$c=(1,n,\dots,2,1)$. Then the following statements are equivalent:
	\begin{enumerate}
		\item\label{p0:Hur_cycle} $M$ is Hurwitz;
		\item\label{p1:sum_cycle} $\gamma_c<1$;
		\item\label{p2:max_cycle} there exists a solution to~\eqref{eq:maxgain_M} such that $\psi_c<1$.
		%$\psi_c$ in~\eqref{eq:cycle_max} satisfies
	\end{enumerate}
\end{lemma}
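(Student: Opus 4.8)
The plan is to prove the chain of equivalences by first establishing that statement~\ref{p0:Hur_cycle} is equivalent to statement~\ref{p1:sum_cycle} through a direct determinant computation that exploits the cyclic structure, and then that statement~\ref{p1:sum_cycle} is equivalent to statement~\ref{p2:max_cycle} by observing that every node of a simple cycle has a unique in-neighbor, so that the gain constraints decouple edge by edge.

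For the first equivalence I would appeal to Lemma~\ref{lemma:equivalentchar}\ref{p3:minor} and examine the leading principal minors of $-M$. The key structural remark is that the only nonzero entry strictly below the superdiagonal is the corner entry $m_{n1}$; consequently every leading principal submatrix $M_{\{1,\dots,i\}}$ with $i<n$ is upper triangular with diagonal $m_{11},\dots,m_{ii}$, and since all diagonal entries are negative, the order-$i$ leading principal minor of $-M$ equals $\prod_{k=1}^{i}(-m_{kk})>0$ \emph{automatically}. Hence the only minor whose sign is in question is the full determinant. Expanding $\det M$ along the first column, which contains only the two entries $m_{11}$ and $m_{n1}$, gives $\det M = \prod_{i=1}^{n} m_{ii} + (-1)^{n+1} m_{n1}\prod_{i=1}^{n-1} m_{i,i+1}$, so that $\det(-M)=\prod_{i=1}^{n}(-m_{ii}) - m_{n1}\prod_{i=1}^{n-1}m_{i,i+1}=\bigl(\prod_{i=1}^{n}(-m_{ii})\bigr)(1-\gamma_c)$, where the last equality is the definition~\eqref{eq:cycle_sum} of $\gamma_c$. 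Because $\prod_{i=1}^{n}(-m_{ii})>0$, we obtain $\det(-M)>0$ if and only if $\gamma_c<1$; combined with the automatic positivity of the lower-order minors, Lemma~\ref{lemma:equivalentchar}\ref{p3:minor} yields the equivalence of~\ref{p0:Hur_cycle} and~\ref{p1:sum_cycle}.

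For the second equivalence I would note that in a simple cycle each node $i$ has exactly one in-neighbor $j(i)$, so $\mathcal{N}_i=\{j(i)\}$ and constraint~\eqref{eq:maxgain_M} reduces to the single scalar inequality $\psi_{i,j(i)}>m_{i,j(i)}/(-m_{ii})$ for each edge. By~\eqref{eq:cycle_max} the max-cycle gain $\psi_c$ is the product of the factors $\psi_{ij}$ over the cycle edges, while by~\eqref{eq:cycle_sum} the gain $\gamma_c$ is exactly the product of the corresponding lower bounds $m_{ij}/(-m_{ii})$. Thus, if $\gamma_c<1$, choosing each $\psi_{ij}$ close enough to its lower bound makes $\psi_c$ as close to $\gamma_c$ as desired and in particular strictly below $1$, producing a feasible solution with $\psi_c<1$. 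Conversely, every feasible choice satisfies $\psi_c>\gamma_c$ strictly, since each factor strictly exceeds its bound; hence if $\gamma_c\ge 1$ then $\psi_c>1$ and no feasible solution with $\psi_c<1$ exists. This establishes the equivalence of~\ref{p1:sum_cycle} and~\ref{p2:max_cycle} and completes the proof.

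The main obstacle I anticipate is bookkeeping rather than conceptual: correctly tracking the sign $(-1)^{n+1}$ arising in the cofactor of the corner term and verifying that the formula collapses precisely to $\bigl(\prod_{i=1}^{n}(-m_{ii})\bigr)(1-\gamma_c)$, together with a careful matching of each factor $\psi_{ij}$ to its cycle edge so that the product of the lower bounds coincides with $\gamma_c$ term by term.
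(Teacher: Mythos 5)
Your proposal is correct and follows essentially the same route as the paper's proof: both reduce Hurwitzness to positivity of the leading principal minors of $-M$ via Lemma~\ref{lemma:equivalentchar}\ref{p3:minor}, observe that the minors of order $i<n$ are automatically positive by upper-triangularity, identify $\det(-M)=\bigl(\prod_{i=1}^n(-m_{ii})\bigr)(1-\gamma_c)$, and handle the max-gain condition by choosing each $\psi_{ij}$ slightly above its lower bound $m_{ij}/(-m_{ii})$. Your converse direction for \ref{p1:sum_cycle}$\Leftrightarrow$\ref{p2:max_cycle} (any feasible choice satisfies $\psi_c>\gamma_c$ strictly, since each node of the cycle has a single in-neighbor) is spelled out more explicitly than the paper's one-line remark, but it is the same argument.
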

\begin{proof}
	Regarding the equivalence between \ref{p0:Hur_cycle} and \ref{p1:sum_cycle}: by Lemma~\ref{lemma:equivalentchar}\ref{p3:minor}, $M$ is Hurwitz if and only if all the leading principal minors of $-M$ are positive. If $i<n$ and $I=\{1,\dots,i\}$, then the leading principal submatrices $(-M)_{I}$ of $-M$ are upper triangular  with positive diagonal elements and thus $\det((-M)_I)>0$.  When $I= \{1,\dots,n\}$, we have
%	\begin{align*}
%	\det(-M)&=(-1)^n\det(M)\\
%	&=(-1)^n(\prod_{i=1}^nm_{ii}+(-1)^{n+1}m_{n1}\prod_{i=1}^{n-1}m_{i,i+1})\\
%	&=\prod_{i=1}^n(-m_{ii})-m_{n1}\prod_{i=1}^{n-1}m_{i,i+1}.
%	\end{align*}
	\begin{equation*}
		\det(-M)=\prod_{i=1}^n(-m_{ii})-m_{n1}\prod_{i=1}^{n-1}m_{i,i+1}.
	\end{equation*}	
	Then, $\det(-M)>0$ if and only if
	\begin{align*}
	\prod_{i=1}^n(-m_{ii})>m_{n1}\prod_{i=1}^{n-1}m_{i,i+1},
	\end{align*}
	which is equivalent to $\gamma_c<1$.
	
	Regarding the equivalence between \ref{p1:sum_cycle} and \ref{p2:max_cycle}: notice that if we pick $\psi_{ij}=\frac{m_{ij}}{-m_{ii}}+\epsilon$ for sufficiently small $\epsilon>0$, then \eqref{eq:maxgain_M} is satisfied and $\psi_c<1$ is equivalent to $\gamma_c<1$.
\end{proof}

It is worth mentioning that the necessary and sufficient condition in
Lemma~\ref{lemma:simplecycle} is a special case of a more general
result in \cite[Proposition 2]{MArcak:11} regarding diagonal stability.

\begin{example}[A two by two Metzler matrix describing a flow system {\cite[Exercise 9.8]{FB:19}}]
	We apply Lemma~\ref{lemma:simplecycle} to a simple two by two case where the Metzler matrix describes a symmetric flow system $\dot{x}=Mx$. Suppose the Metzler matrix $M$ has the following form
	\begin{equation*}
	M=\begin{bmatrix}
	g-f & f\\
	f& -d-f
	\end{bmatrix},
	\end{equation*}
	where $f>0$ is the flow rate between two nodes, $g>0$ is the growth rate at node $1$ and $d>0$ is the decay rate at node $2$. By Lemma~\ref{lemma:simplecycle}, the flow system $\dot{x}=Mx$ is asymptotically stable if and only if
	\begin{equation*}
	g-f<0,~-d-f<0,~\textup{and }\frac{f^2}{(f-g)(d+f)}<1.
	\end{equation*}
	Equivalently, we have
	\begin{equation*}
	d>g\textup{ and }f>\frac{dg}{d-g}.
	\end{equation*}
	This condition has a clear physical interpretation that in order for the two-node flow system $\dot{x}=Mx$ to be asymptotically stable, i.e., the flow does not accumulate in the system, the decay rate at one node  must be larger than the growth rate at the other node and the flow rate between the nodes should be sufficiently large.
\end{example}

Lemma~\ref{lemma:simplecycle} states that a Metzler matrix whose associated digraph is a simple cycle is Hurwitz if and only if the cycle gain is less than
$1$. It turns out that, for irreducible Metzler matrices with general digraphs, the gains of the simple cycles play a central role in determining the Hurwitzness. Moreover, cycle gains in different forms (sum or max) lead to different graph-theoretic conditions.

\subsection{Max-cycle gains and Hurwitz Metzler matrices}

In this subsection, we use the max-cycle gains of the Metzler system~\eqref{eq:metzler} to characterize the Hurwitzness of a Metzler matrix, and the cyclic small gain theorem in Lemma~\ref{lemma:SGG} becomes necessary and sufficient in this case. 

\begin{theorem}[Max-interconnection characterization]\label{thm:equivalence}
	Let $M\in\real^{n\times n}$ be an irreducible Metzler matrix with
	negative diagonal elements, $\mathcal{G}(M)=(V,\mathcal{E},M)$ be the
	associated digraph, and $\Phi$ be the set of simple cycles of
	$\mathcal{G}(M)$. Then the following conditions are equivalent:
	\begin{enumerate}
		\item\label{p1:hurmax} $M$ is Hurwitz;
		\item\label{p2:SGT} for every $i\in V$ and $j\in\mathcal{N}_i$, there
		exists $\psi_{ij}>0$ such that
		\begin{align}
		&\sum_{j\in\mathcal{N}_i} \left(\frac{m_{ij}}{-m_{ii}}\right) \psi^{-1}_{ij}< 1
		,&&\quad\textup{for all } i\in \{1,\ldots,n\}, \label{eq:maxgaincond1}\\
		&\psi_{c} < 1, &&\quad\textup{for all }
		c\in \Phi. \label{eq:maxgaincond2}
		\end{align}
	\end{enumerate}
\end{theorem}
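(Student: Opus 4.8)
The plan is to establish the two implications separately. The implication \ref{p2:SGT}$\Rightarrow$\ref{p1:hurmax} follows almost directly from the machinery already assembled: condition~\eqref{eq:maxgaincond1} is identical to~\eqref{eq:maxgain_M}, so by Lemma~\ref{lemma:ISS-gains-Metzler}\ref{p2:max-inter} the scalars $\{\psi_{ij}\}$ constitute a family of max-interconnection gains $\Psi_{ij}(s)=\psi_{ij}s$. Because these gains are linear, the composition around a simple cycle $c=(i_1,\dots,i_k,i_1)$ collapses to $\Psi_{i_2i_1}\circ\cdots\circ\Psi_{i_1i_k}(s)=\psi_c\,s$, so the cycle condition~\eqref{eq:maxgaincond2}, namely $\psi_c<1$, is exactly the cyclic small-gain hypothesis~\eqref{eq:gaincompo}. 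Lemma~\ref{lemma:SGG} then yields that the Metzler system~\eqref{eq:metzler} is ISS, and Lemma~\ref{lemma:ISS-gains-Metzler}\ref{p3:ISS} translates ISS into Hurwitzness of $M$.

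For the converse \ref{p1:hurmax}$\Rightarrow$\ref{p2:SGT}, I would start from Lemma~\ref{lemma:equivalentchar}\ref{p4:right}, which provides a vector $\xi>\mathbb{0}_n$ with $M\xi<\mathbb{0}_n$. Reading this componentwise and dividing by $(-m_{ii})\xi_i>0$ gives, for every $i$, the strict inequality $r_i:=\sum_{j\in\mathcal{N}_i}\frac{m_{ij}}{-m_{ii}}\frac{\xi_j}{\xi_i}<1$. This suggests the natural choice $\psi_{ij}=\xi_i/\xi_j$, which makes $\psi_{ij}^{-1}=\xi_j/\xi_i$ and hence satisfies~\eqref{eq:maxgaincond1}. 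The difficulty is that with this choice the cycle gain telescopes: for $c=(i_1,\dots,i_k,i_1)$ one gets $\psi_c=\frac{\xi_{i_2}}{\xi_{i_1}}\cdots\frac{\xi_{i_1}}{\xi_{i_k}}=1$, so~\eqref{eq:maxgaincond2} fails to be strict.

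The resolution is to introduce a single uniform contraction factor $\theta\in(0,1)$ and set $\psi_{ij}=\theta\,\xi_i/\xi_j$. Then~\eqref{eq:maxgaincond1} becomes $\theta^{-1}r_i<1$, which holds for every $i$ as soon as $\theta>\max_i r_i$; since $\max_i r_i<1$, the interval $(\max_i r_i,1)$ is nonempty and admits such a $\theta$. At the same time the telescoping now produces $\psi_c=\theta^{k}<1$ for every simple cycle of length $k$, so~\eqref{eq:maxgaincond2} holds as well. The main obstacle is precisely this tension between the two families of inequalities: the vector $\xi$ satisfies the node conditions~\eqref{eq:maxgaincond1} but forces each cycle gain to equal exactly $1$, while any attempt to shrink the cycle gains below $1$ inflates the left-hand sides of~\eqref{eq:maxgaincond1}. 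The key fact that breaks the tie is the strictness of $M\xi<\mathbb{0}_n$, which leaves a uniform margin $\max_i r_i<1$ that a single scaling $\theta$ just below $1$ can exploit to satisfy both conditions simultaneously.
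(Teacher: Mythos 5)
Your proof is correct and takes essentially the same route as the paper: the forward implication via Lemma~\ref{lemma:ISS-gains-Metzler} and the cyclic small-gain theorem (Lemma~\ref{lemma:SGG}), and the converse via the vector $\xi$ from Lemma~\ref{lemma:equivalentchar}\ref{p4:right} together with the telescoping of the cycle products. Your uniform scaling $\psi_{ij}=\theta\,\xi_i/\xi_j$ with $\theta\in(\max_i r_i,1)$ is precisely an explicit, constructive version of the ``straightforward continuity argument'' that the paper invokes to pass from $\psi_c=1$ to $\psi_c<1$ while preserving the strict node inequalities.
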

\begin{proof}
	\textup{(ii) $\implies$ (i):} Since the diagonal entries
	of $M$ are negative, the Metzler system~\eqref{eq:metzler} is componentwise
	ISS by Lemma~\ref{lemma:ISS-gains-Metzler}\ref{p0:component}. By Lemma~\ref{lemma:ISS-gains-Metzler}\ref{p2:max-inter},
	there exist max-interconnection gains $\{\psi_{ij}\}$  such that
	\begin{align*}
	\sum_{j\in\mathcal{N}_i} \left(\frac{m_{ij}}{-m_{ii}}\right) \psi^{-1}_{ij}< 1
	,\quad\textup{for all } i\in \{1,\ldots,n\}.
	\end{align*}
	Thus, the sufficient condition in Lemma~\ref{lemma:SGG} is
	equivalent to the existence of $\psi_{ij}>0$, for $i\in V$ and $j\in\mathcal{N}_i$ such that
	\begin{align*}
	&\sum_{j\in\mathcal{N}_i}  \left(\frac{m_{ij}}{-m_{ii}}\right)
	\psi^{-1}_{ij}< 1
	,&&\quad\textup{for all } i\in \{1,\ldots,n\}, \\
	&\psi_{c} < 1, &&\quad\textup{for all }
	c\in \Phi.
	\end{align*}
	Therefore, by Lemma~\ref{lemma:SGG}, the Metzler
	system~\eqref{eq:metzler} is ISS and asymptotically stable, which
	implies that $M$ is Hurwitz.
	
	\textup{(i) $\implies$ (ii):} Suppose that $M$ is
	Hurwitz, then by Lemma~\ref{lemma:equivalentchar}\ref{p4:right} there exists $\xi>\mathbb{0}_n$ such
	that $M\xi < \mathbb{0}_n$. Therefore, $\textup{diag}(\xi^{-1}) M \textup{diag}(\xi)$ is a
	Metzler matrix with negative row sums, which implies
	\begin{align*}
	\sum_{j\in\mathcal{N}_i} \left(\frac{m_{ij}}{-m_{ii}}\right)\frac{\xi_j}{\xi_i}< 1
	,\quad\textup{for all } i\in \{1,\ldots,n\}.
	\end{align*}
	Note that, for every $(i_1,\ldots,i_{k},i_{1})\in \Phi$, we have
	\begin{align*}
	\frac{\xi_{i_2}}{\xi_{i_1}}\ldots \frac{\xi_{i_1}}{\xi_{i_k}} = 1.
	\end{align*}
	Thus, we have
	\begin{align*}
	&\sum_{j\in\mathcal{N}_i}  \left(\frac{m_{ij}}{-m_{ii}}\right)\frac{\xi_j}{\xi_i}< 1
	,&&\quad\textup{for all } i\in \{1,\ldots,n\}, \\
	&\frac{\xi_{i_2}}{\xi_{i_1}}\ldots \frac{\xi_{i_1}}{\xi_{i_k}} = 1, &&\quad\textup{for all }
	(i_1,\ldots,i_{k},i_{1})\in \Phi.
	\end{align*}
	By a straightforward continuity argument, one can show that,
	for every $i\in V$ and $j\in\mathcal{N}_i$,
	there exists $\psi_{ij}>0$ such that
	\begin{align*}
	&\sum_{j\in\mathcal{N}_i} \left(\frac{m_{ij}}{-m_{ii}}\right)
	\psi^{-1}_{ij}< 1
	,&&\quad\textup{for all } i\in \{1,\ldots,n\}, \\
	&\psi_{c} < 1, &&\quad\textup{for all }
	c\in \Phi.
	\end{align*}
	This completes the proof.
\end{proof}

By Theorem~\ref{thm:equivalence}, we can prove the following
corollary.
\begin{corollary}[Diagonal Stability and Hurwitzness of Metzler matrices]\label{thm:Arcak}
	Let $M\in\real^{n\times n}$ be an irreducible Metzler matrix with negative diagonal elements, $\mathcal{G}(M)=(V,\mathcal{E},M)$ be the associated digraph,
	and $\Phi$ be the set of simple cycles of $\mathcal{G}(M)$. Assume
	that any two simple cycles of $\mathcal{G}(M)$ have at most one vertex in common, i.e., $\mathcal{G}(M)$ is cactus. Then the following conditions are equivalent:
	\begin{enumerate}
		\item\label{p1:Arcakhur} $M$ is Hurwitz;
		\item\label{p2:ArcakSGT} for every $c\in \Phi$ and every node $i\in c$,
		there exists positive constant $\theta^{c}_{i}>0$ such that
		\begin{equation}\label{eq:murat}
		\begin{alignedat}{2}
		&\prod_{i\in c}\theta^c_i > \gamma_{c}, &&\quad\textup{for all } c\in \Phi,\\
		&\sum_{c\in \Phi} \theta^c_i = 1,&&\quad\textup{for all } i\in c,
		\end{alignedat}
		\end{equation}
		where $\gamma_c$ is defined in equation~\eqref{eq:cycle_sum}.
	\end{enumerate}
\end{corollary}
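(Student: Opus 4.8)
The plan is to deduce the corollary from Theorem~\ref{thm:equivalence} by reparametrizing its max-interconnection gains $\{\psi_{ij}\}$ in terms of the node--cycle weights $\{\theta_i^c\}$, using the special structure of cactus graphs. Throughout I write $r_{ij} = m_{ij}/(-m_{ii}) > 0$ for each edge $(j,i)\in\mathcal{E}$, and I read $\sum_{c\in\Phi}\theta_i^c$ in~\eqref{eq:murat} as the sum over those cycles $c$ that contain $i$.

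The first and crucial step is the structural consequence of the cactus hypothesis. In a strongly connected digraph every off-diagonal edge $(j,i)$ lies on at least one simple cycle, obtained by closing a simple path from $i$ to $j$ with the edge $(j,i)$; the cactus condition forces \emph{at most one} such cycle, since two distinct cycles through $(j,i)$ would share the two endpoints $i$ and $j$. Hence each edge lies on a unique simple cycle $c(i,j)$. Since a simple cycle meets $i$ exactly once, it has a single edge entering $i$, so the map $j\mapsto c(i,j)$ is a bijection between $\mathcal{N}_i$ and the set of simple cycles through $i$; I denote its inverse by $c\mapsto p_c(i)$, the predecessor of $i$ along $c$. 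This bijection is what genuinely uses the cactus assumption.

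Given this, I would record two identities for positive weights related by $\psi_{ij} = r_{ij}/\theta_i^{c(i,j)}$. Summing over in-neighbors and using the bijection,
\begin{equation*}
\sum_{j\in\mathcal{N}_i} r_{ij}\,\psi_{ij}^{-1} = \sum_{c\ni i}\theta_i^c,
\end{equation*}
while multiplying the edge gains around a cycle gives, by~\eqref{eq:cycle_sum},
\begin{equation*}
\psi_c = \prod_{i\in c}\psi_{i,p_c(i)} = \frac{\prod_{i\in c} r_{i,p_c(i)}}{\prod_{i\in c}\theta_i^c} = \frac{\gamma_c}{\prod_{i\in c}\theta_i^c}.
\end{equation*}
For the implication \ref{p2:ArcakSGT}$\Rightarrow$\ref{p1:Arcakhur}, I set $\psi_{ij} = r_{ij}/(\lambda\,\theta_i^{c(i,j)})$ with $\lambda<1$, so that the row condition reads $\sum_{j\in\mathcal{N}_i} r_{ij}\psi_{ij}^{-1} = \lambda<1$ and $\psi_c = \gamma_c/(\lambda^{|c|}\prod_{i\in c}\theta_i^c)$; since $\prod_{i\in c}\theta_i^c>\gamma_c$ strictly for the finitely many cycles, $\lambda$ can be taken close enough to $1$ to keep every $\psi_c<1$, and Theorem~\ref{thm:equivalence} yields Hurwitzness. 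Conversely, Theorem~\ref{thm:equivalence} furnishes $\psi_{ij}>0$ with $\sum_{j\in\mathcal{N}_i} r_{ij}\psi_{ij}^{-1}<1$ and $\psi_c<1$; defining $\tilde\theta_i^c = r_{i,p_c(i)}\psi_{i,p_c(i)}^{-1}$ gives $S_i := \sum_{c\ni i}\tilde\theta_i^c<1$ and $\prod_{i\in c}\tilde\theta_i^c = \gamma_c/\psi_c>\gamma_c$, and normalizing $\theta_i^c = \tilde\theta_i^c/S_i$ produces $\sum_{c\ni i}\theta_i^c=1$ together with $\prod_{i\in c}\theta_i^c = (\gamma_c/\psi_c)/\prod_{i\in c}S_i>\gamma_c$, the last inequality holding because each $S_i<1$ and $\psi_c<1$.

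The main obstacle is the mismatch between the strict row inequality $\sum_j r_{ij}\psi_{ij}^{-1}<1$ available from Theorem~\ref{thm:equivalence} and the equality constraint $\sum_{c\ni i}\theta_i^c=1$ demanded by~\eqref{eq:murat}. This is reconciled by the slack factor $\lambda$ in one direction and by the per-node normalization $S_i<1$ in the other, both of which are licensed precisely by the strict cycle inequalities; checking that a single $\lambda$ works simultaneously for all cycles relies on $\Phi$ being finite.
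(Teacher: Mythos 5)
Your proof is correct and takes essentially the same route as the paper's: both directions are deduced from Theorem~\ref{thm:equivalence} by translating between edge gains $\psi_{ij}$ and node--cycle weights $\theta^c_i$, using the fact that in a strongly connected cactus each edge lies on exactly one simple cycle (the paper phrases this as ``no two simple cycles share an edge''). The only difference is one of rigor, in your favor: the explicit $\lambda$-slack factor and the $S_i$-normalization spell out what the paper dispatches as ``straightforward continuity arguments.''
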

\begin{proof}
	We postpone the proof to Appendix~\ref{appen:Arcak}.
\end{proof}
\begin{remark}\label{rmk:SGG}
	\begin{enumerate}
		\item\label{p1:murat} The condition in Corollary~\ref{thm:Arcak}\ref{p2:ArcakSGT}
		for Metzler matrices is the same as conditions (11) and (12)
		in~\cite[Theorem 1]{MArcak:11} for the diagonal stability of
		arbitrary matrices with cactus graphs. Therefore, in the
		context of Metzler matrices, Theorem~\ref{thm:equivalence} is a
		generalization of~\cite[Theorem 1]{MArcak:11} to arbitrary
		topologies.
		\item\label{p2:LP} One can compute the positive constants $\psi_{ij}$
		in Theorem~\ref{thm:equivalence}\ref{p2:SGT} by solving the following
		feasibility problem
		\begin{equation}
		\begin{aligned}\label{eq:lp}
		& {\textup{Find}}
		& & \xi \\
		& \textup{subject to}
		& & \xi > \mathbb{0}_n,\\
		&&& M\xi< \mathbb{0}_n.
		\end{aligned}
		\end{equation}
		Then, for $i\in V$ and $j\in\mathcal{N}_i$, we can compute $\psi_{ij}$ as
		\begin{equation*}
		\psi_{ij} = \delta\frac{\xi_i}{\xi_j},
		\end{equation*}
		where $0<\delta < 1$ is given by
		\begin{equation*}
		\delta= \max_i\left\{\sum_{j\in\mathcal{N}_i}\frac{
			m_{ij}}{-m_{ii}}\frac{\xi_j}{\xi_i}\right\}.
		\end{equation*}
	The problem~\eqref{eq:lp} is not a linear
        programming due to the strict inequalities. However, one can
        easily transform it to the following linear programming.
	\begin{equation*}
		\begin{aligned}
			& {\textup{Find}}
			& & \xi \\
			& \textup{subject to}
			& & \xi \geq \mathbb{1}_n,\\
			&&& M\xi\leq -\mathbb{1}_n.
		\end{aligned}
	\end{equation*}
%	\begin{equation}
%		\begin{aligned}\label{eq:standard-lp}
%			& {\textup{min}}
%			& & \vect{0}_n^{\top}\eta \\
%			& \textup{subject to}
%			&& M\eta\leq -\vect{1}_n +M\vect{1}_n,\\
%                        &&& \eta \geq \vect{0}_n.\\
%		\end{aligned}
%	\end{equation}
	\end{enumerate}
\end{remark}
%It is easy to see that $\eta$ is a soltuion for the standard linear
%programming~\eqref{eq:standard-lp} if and only if $\xi = \eta + \vect{1}_n$ is
%a solution for the feasibility problem~\eqref{eq:lp}. 

In order to check conditions~\eqref{eq:maxgaincond1} and \eqref{eq:maxgaincond2}, we need to compute the max-interconnection ISS gains using the method in Remark~\ref{rmk:SGG}\ref{p2:LP}. This computation is essentially equivalent to the
well-known condition in Lemma~\ref{lemma:equivalentchar}\ref{p4:right}.

\subsection{Sum-cycle gains and Hurwitz Metzler
	matrices}\label{sec:graphHurwitzness}

In this subsection, we use sum-cycle gains to characterize the Hurwitzness of Metzler matrices. We first introduce the concept of \emph{disjoint cycle sets}.
\begin{definition}[Disjoint cycle sets]
	Let $M\in\real^{n\times n}$ be a Metzler matrix with the associated digraph $\mathcal{G}(M)$ and $\Phi=\{c_1,\dots,c_r\}$ be the set of simple cycles in $\mathcal{G}(M)$, the \emph{disjoint cycle sets} $K_{\ell}^M$ for $\ell\in\{1,\dots,r\}$ are defined by
	\begin{equation*}
	K_{\ell}^M=\{\{c_{i_1},\dots,c_{i_\ell}\}\subset\Phi\,|\,c_{i_k}\cap c_{i_{k'}}=\emptyset,
	k\neq k' \textup{ and } k,k' \in \{1,\dots,\ell\}\}.
	\end{equation*}
\end{definition}
Intuitively, the disjoint cycle sets $K_{\ell}^M$ are sets where each element is a set of $\ell$ cycles that are mutually disjoint. We collect the graph-theoretic interpretations for the disjoint cycle sets in Section~\ref{sec:disjointcycle}. With the disjoint cycle sets, we are ready to define the notion of {total
	cycle gain} of a Metzler matrix and its leading principal submatrices.

\begin{definition}[Total cycle gain]\label{def:totalgain}
	Let $M\in\real^{n\times n}$ be an irreducible Metzler matrix with negative diagonal elements. For $i=\{1,\dots,n\}$ and $I=\{1,\dots,i\}$, the leading principal submatrix $M_I$ has the associated digraph
	$\mathcal{G}(M_I)$, set of simple cycles
	$\Phi_{M_I}=\{c_1,\dots,c_{r_{M_I}}\}$ and disjoint cycle sets $K_{\ell}^{M_I}$,
	$\ell\in\{1,\dots,r_{M_I}\}$, then the \emph{total cycle gain} of $M_I$ is defined by
	\begin{small}
		\begin{equation}\label{eq:matrixgain_sub}
		\gamma_{M_I}
		=\begin{cases}
		\sum\limits_{\ell=1}^{r_{M_I}}\sum\limits_{\{c_{i_1},\ldots,c_{i_\ell}\}
			\in K^{M_I}_\ell} (-1)^{\ell-1} \gamma_{c_{i_1}}\ldots \gamma_{c_{i_\ell}},& \textup{if }\Phi_{M_I}\neq \emptyset,\\
		0,&\textup{if } \Phi_{M_I}= \emptyset.
		\end{cases}
		\end{equation}
	\end{small}
\end{definition}

\begin{example}[Disjoint cycle sets and total cycle gain]\label{exam:concepts}
	We illustrate the definitions of the disjoint cycle sets and the total cycle gain in this example. Let $M\in\real^{6\times6}$ be an irreducible Metzler matrix with negative diagonal elements as follows
	\begin{equation*}
	M=\begin{bmatrix}
	m_{11}&m_{12}&0&0&0&m_{16}\\
	m_{21}&m_{22}&m_{23}&0&0&0\\
	0&m_{32}&m_{33}&0&0&0\\
	0&0&m_{43}&m_{44}&m_{45}&0\\
	0&0&0&m_{54}&m_{55}&0\\
	m_{61}&0&0&0&m_{65}&m_{66}
	\end{bmatrix}.
	\end{equation*}
	The associated weighted digraph $\mathcal{G}(M)$ is shown in Fig.~\ref{fig:cyclesetsTotalgain}.
	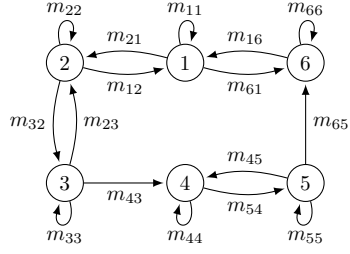
\begin{figure}[http]
		\centering
		\begin{tikzpicture}[scale=0.8, transform shape]
		% Draw the states
		\node[state,minimum size = 0.2cm] at (2, 0) (nodeone) {$1$};
		\node[state,minimum size = 0.2cm] at (0, 0)     (nodetwo)     {{$2$}};
		\node[state,minimum size = 0.2cm] at (0, -2)     (nodethree)     {{$3$}};
		\node[state,minimum size = 0.2cm] at (2, -2)     (nodefour)     {{$4$}};
		\node[state,minimum size = 0.2cm] at (4, -2)     (nodefive)     {{$5$}};
		\node[state,minimum size = 0.2cm] at (4, 0)     (nodesix)     {{$6$}};
		
		% Connect the states with arrows
		\draw[every loop,
		auto=right,
		>=latex,
		]
		
		(nodeone) edge[bend right=15, auto=right] node {$m_{21}$} (nodetwo)
		(nodetwo) edge[bend right=15, auto=right] node {$m_{12}$} (nodeone)
		
		(nodetwo)     edge[bend right=15]            node {$m_{32}$} (nodethree)
		(nodethree)     edge[bend right=15]            node {$m_{23}$} (nodetwo)
		(nodethree)     edge[bend left=0, auto=right] node {$m_{43}$} (nodefour)
		
		(nodefour)     edge[bend right=15, auto=right] node {$m_{54}$} (nodefive)
		(nodefive)     edge[bend right=15, auto=right] node {$m_{45}$} (nodefour)
		
		(nodefive)     edge[bend right=0]            node {$m_{65}$} (nodesix)
		
		(nodesix) edge[bend right=15, auto=right] node {$m_{16}$} (nodeone)
		(nodeone) edge[bend right=15, auto=right] node {$m_{61}$} (nodesix)
		
		(nodeone) edge[loop above] node {$m_{11}$} (nodeone)
		(nodetwo) edge[loop above] node {$m_{22}$} (nodetwo)
		(nodethree) edge[loop below] node {$m_{33}$} (nodethree)
		%      (nodefour) edge[loop below,auto=left , text height=-3cm] node {$m_{44}$} (nodefour)
		(nodefour) edge[loop below,auto=left] node {$m_{44}$} (nodefour)
		(nodefive) edge[loop below] node {$m_{55}$} (nodefive)
		(nodesix) edge[loop above] node {$m_{66}$} (nodesix);
		\end{tikzpicture}\caption{The associated weighted digraph $\mathcal{G}(M)$}\label{fig:cyclesetsTotalgain}
	\end{figure}
	There are five cycles in $\mathcal{G}(M)$, i.e., $c_1=(1,2,1)$, $c_2=(2,3,2)$, $c_3=(4,5,4)$, $c_4=(6,1,6)$, $c_5=(1,2,3,4,5,6,1)$, and the disjoint cycle sets of $M$ are:
	\begin{align}\label{eq:disjointcyclesets}
	\begin{split}
	&K_{1}^M=\{\{c_1\},\{c_2\},\{c_3\},\{c_4\},\{c_5\}\},\\
	&K_{2}^M=\{\{c_1,c_3\},\{c_2,c_3\},\{c_2,c_4\},\{c_3,c_4\}\},\\
	&K_{3}^M=\{\{c_2,c_3,c_4\}\},\\
	&K_{4}^M=K_{5}^M=\emptyset.
	\end{split}
	\end{align}
	According to~\eqref{eq:matrixgain_sub}, the total cycle gains of the leading principal submatrices are given by:
	\begin{align}\label{eq:examgammaMI}
	\begin{split}
	&\gamma_{M_{\{1\}}}=0,\qquad \gamma_{M_{\{1,2\}}}=\gamma_{c_1},\\
	&\gamma_{M_{\{1,2,3\}}}=\gamma_{c_1}+\gamma_{c_2},\qquad \gamma_{M_{\{1,2,3,4\}}}=\gamma_{c_1}+\gamma_{c_2},\\
	&\gamma_{M_{\{1,2,3,4,5\}}}=\gamma_{c_1}+\gamma_{c_2}+\gamma_{c_3}-\gamma_{c_1}\gamma_{c_3}-\gamma_{c_2}\gamma_{c_3},\\
	&\gamma_{M_{\{1,2,3,4,5,6\}}}=\gamma_M =\gamma_{c_1}+\gamma_{c_2}+\gamma_{c_3}+\gamma_{c_4}+\gamma_{c_5}-\gamma_{c_1}\gamma_{c_3}-\gamma_{c_2}\gamma_{c_3}-\gamma_{c_2}\gamma_{c_4}\\
	&\qquad\qquad\qquad\quad\qquad-\gamma_{c_3}\gamma_{c_4}+\gamma_{c_2}\gamma_{c_3}\gamma_{c_4}.
	\end{split}
	\end{align}
\end{example}

With the above definitions, we now present a useful lemma.

\begin{lemma}[Determinant and total cycle gain]\label{lemma:Metzlerdeterminant}
	Let $M\in\real^{n\times n}$ be an irreducible Metzler matrix with negative diagonal elements and let $\gamma_{M_I}$ be the total
	cycle gain of $M_I$ for $i\in\{1,\dots,n\}$ and $I=\{1,\dots,i\}$. Then
	\begin{equation}\label{eq:determinantMetzlermain_sub}
	\det(M_I) = (1-\gamma_{{M}_I}) \prod_{j=1}^im_{jj}.
	\end{equation}
\end{lemma}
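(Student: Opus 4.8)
The plan is to prove the identity by expanding $\det(M_I)$ through the Leibniz (permutation-sum) formula and reorganizing the terms according to the cycle structure of each permutation. First I would write
\[
\det(M_I) = \sum_{\sigma \in S_i} \operatorname{sgn}(\sigma) \prod_{j=1}^i m_{j\sigma(j)},
\]
and recall that every permutation $\sigma$ of $\{1,\dots,i\}$ decomposes uniquely into a set of fixed points together with a set of vertex-disjoint nontrivial cycles $\pi_1,\dots,\pi_\ell$, each of length at least $2$. A given $\sigma$-term is nonzero only when each nontrivial cycle $\pi_m$ is realized by edges of $\mathcal{G}(M_I)$, in which case $\pi_m$ corresponds to exactly one simple cycle $c_m$ of the digraph. (Because the paper's convention assigns weight $m_{ij}$ to the edge $(j,i)$, a permutation cycle traverses its graph cycle in the reverse direction, but this is harmless since only the product of the weights enters.)

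The second step is the per-cycle computation. For a nontrivial cycle $\pi_m$ on node set $V_m$, the corresponding factor $\prod_{j\in V_m} m_{j\sigma(j)}$ is precisely the product of the edge weights around $c_m$, which by the definition of the sum-cycle gain in \eqref{eq:cycle_sum} equals $\gamma_{c_m}\prod_{j\in V_m}(-m_{jj}) = (-1)^{|V_m|}\gamma_{c_m}\prod_{j\in V_m} m_{jj}$. Combining this with $\operatorname{sgn}(\sigma)=\prod_{m=1}^\ell(-1)^{|V_m|-1}$ (a length-$k$ cycle has sign $(-1)^{k-1}$) and with the diagonal factors contributed by the fixed points, each nontrivial cycle yields a net sign $(-1)^{|V_m|-1}(-1)^{|V_m|}=-1$, while the diagonal entries of \emph{all} nodes, fixed and cyclic alike, reassemble into $\prod_{j=1}^i m_{jj}$. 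Hence the entire $\sigma$-term collapses to $(-1)^\ell\,\gamma_{c_1}\cdots\gamma_{c_\ell}\,\prod_{j=1}^i m_{jj}$.

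Finally I would sum over all $\sigma$. The identity permutation ($\ell=0$) contributes $\prod_{j=1}^i m_{jj}$, and for each $\ell\geq 1$ the admissible choices of $\ell$ vertex-disjoint simple cycles range over exactly the disjoint cycle set $K_\ell^{M_I}$. Factoring out $\prod_{j=1}^i m_{jj}$ then gives
\[
\det(M_I) = \left(1 + \sum_{\ell=1}^{r_{M_I}} (-1)^\ell \sum_{\{c_{i_1},\dots,c_{i_\ell}\}\in K_\ell^{M_I}} \gamma_{c_{i_1}}\cdots\gamma_{c_{i_\ell}}\right)\prod_{j=1}^i m_{jj},
\]
and the parenthesized quantity is exactly $1-\gamma_{M_I}$ by Definition~\ref{def:totalgain} and \eqref{eq:matrixgain_sub}, yielding \eqref{eq:determinantMetzlermain_sub}. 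The main obstacle I anticipate is the sign bookkeeping together with establishing the clean bijection between the nontrivial-cycle structure of a permutation and an element of $K_\ell^{M_I}$: one must verify that each simple cycle of the graph corresponds to a unique permutation cycle (so nothing is over- or under-counted), that the traversal-direction reversal caused by the edge-weight convention leaves each $\gamma_c$ unchanged, and that the two $(-1)$ sources per cycle combine to the uniform factor $(-1)^\ell$ matching the inclusion–exclusion signs in the definition of the total cycle gain.
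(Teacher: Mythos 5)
Your proof is correct and follows essentially the same route as the paper's: both expand $\det(M_I)$ as a sum over collections of vertex-disjoint simple cycles plus self-loops (fixed points), factor out $\prod_{j=1}^i m_{jj}$, and group terms by the number $\ell$ of disjoint cycles, which matches the inclusion--exclusion signs in the definition of $\gamma_{M_I}$. The only difference is that the paper imports the cycle expansion as a known result on ``factors'' of weighted digraphs (citing~\cite{JM-DO-DV-GW:89}) and then massages it into cycle-gain form, whereas you derive that expansion from scratch via the Leibniz formula, carrying out the sign and edge-orientation bookkeeping explicitly that the paper delegates to the citation.
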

\begin{proof}
	We postpone the proof to Appendix~\ref{appen:determinants}.
\end{proof}

We are now ready to write the leading principal minor condition in
Lemma~\ref{lemma:equivalentchar}\ref{p3:minor} in the graph-theoretic language.

\begin{theorem}[Sum-interconnection characterization]\label{thm:HurwitzMetzler}
	Let $M\in\real^{n\times n}$ be an irreducible Metzler matrix with negative diagonal elements, $\mathcal{G}(M)=(V,\mathcal{E},M)$ be the associated digraph,
	and $\Phi$ be the set of simple cycles of $\mathcal{G}(M)$. Then the following statements hold:
	\begin{enumerate}
		\item\label{p1:nec} (necessary condition) if $M$ is Hurwitz then
		\begin{equation*}
		\gamma_{c}<1,\quad \textup{for all } c\in \Phi;
		\end{equation*}
		\item\label{p2:suf} (sufficient condition) if
		\begin{equation*}
		\sum_{c\in \Phi}\gamma_{c}<1,
		\end{equation*}
		then $M$ is Hurwitz;
		\item\label{p3:nec+suf} (necessary and sufficient condition) $M$ is Hurwitz
		if and only if, for all $i\in\until{n}$
		\begin{equation*}%\label{eq:good-conj}
		\gamma_{M_{I}}<1,\quad I=\{1,\dots,i\}.
		\end{equation*}
	\end{enumerate}
\end{theorem}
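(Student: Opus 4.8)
The three parts will be handled separately, with part~\ref{p3:nec+suf} as the cornerstone and the other two leaning on the determinant identity of Lemma~\ref{lemma:Metzlerdeterminant}. For part~\ref{p3:nec+suf}, the plan is to translate the leading-principal-minor test of Lemma~\ref{lemma:equivalentchar}\ref{p3:minor} directly through \eqref{eq:determinantMetzlermain_sub}. Writing $I=\{1,\dots,i\}$ and using $\det\big((-M)_I\big)=(-1)^i\det(M_I)$ together with Lemma~\ref{lemma:Metzlerdeterminant}, I obtain
\begin{equation*}
\det\big((-M)_I\big)=(-1)^i(1-\gamma_{M_I})\prod_{j=1}^i m_{jj}=(1-\gamma_{M_I})\prod_{j=1}^i(-m_{jj}).
\end{equation*}
Since the diagonal entries are negative, $\prod_{j=1}^i(-m_{jj})>0$, so this minor is positive exactly when $\gamma_{M_I}<1$. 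As this holds for every $i$, Lemma~\ref{lemma:equivalentchar}\ref{p3:minor} yields the stated equivalence; this part should be routine.

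For part~\ref{p1:nec}, the plan is to exploit the scaling vector of Lemma~\ref{lemma:equivalentchar}\ref{p4:right}. If $M$ is Hurwitz, pick $\xi>\vect 0_n$ with $M\xi<\vect 0_n$; dividing the $i$th row inequality by $-m_{ii}\xi_i>0$ gives $\sum_{j\in\mathcal N_i}\frac{m_{ij}}{-m_{ii}}\frac{\xi_j}{\xi_i}<1$. For a simple cycle $c=(i_1,\dots,i_k,i_1)$, each edge contributes exactly one nonnegative summand of the inequality at its head node, so $\frac{m_{i_{l+1}i_l}}{-m_{i_{l+1}i_{l+1}}}\frac{\xi_{i_l}}{\xi_{i_{l+1}}}<1$ for each $l$ (indices cyclic). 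Multiplying these $k$ strictly-less-than-one positive factors around the cycle makes the $\xi$-ratios telescope to $1$, leaving precisely $\gamma_c<1$ by \eqref{eq:cycle_sum}.

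For part~\ref{p2:suf}, I would reduce to part~\ref{p3:nec+suf}: it suffices to show $\gamma_{M_I}<1$ for every $I=\{1,\dots,i\}$. Since $\mathcal G(M_I)$ is an induced subgraph, $\Phi_{M_I}\subseteq\Phi$, whence $\sum_{c\in\Phi_{M_I}}\gamma_c\le\sum_{c\in\Phi}\gamma_c<1$. The crux is then the purely combinatorial inequality $\gamma_{M_I}\le\sum_{c\in\Phi_{M_I}}\gamma_c$. To prove it I would argue abstractly with any finite collection $\mathcal C$ of cycles carrying nonnegative gains of total at most $1$, defining $\gamma_{\mathcal C}$ by the same alternating sum \eqref{eq:matrixgain_sub} over its mutually disjoint subcollections, and induct on $|\mathcal C|$. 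Fixing one cycle $c^\ast$ and splitting disjoint subcollections according to whether they contain $c^\ast$ yields the deletion identity
\begin{equation*}
\gamma_{\mathcal C}=\gamma_{\mathcal C\setminus\{c^\ast\}}+\gamma_{c^\ast}\big(1-\gamma_{\mathcal C^\ast}\big),
\end{equation*}
where $\mathcal C^\ast$ is the subcollection of cycles disjoint from $c^\ast$. I would carry the auxiliary bound $0\le\gamma_{\mathcal C}$ through the same induction, which holds precisely because the total gain stays at most $1$: the inductive hypothesis gives $\gamma_{\mathcal C^\ast}\le\sum_{c\in\mathcal C^\ast}\gamma_c\le 1$, so $1-\gamma_{\mathcal C^\ast}\ge0$ and hence $\gamma_{\mathcal C}\ge0$; then $\gamma_{\mathcal C}=\gamma_{\mathcal C\setminus\{c^\ast\}}+\gamma_{c^\ast}-\gamma_{c^\ast}\gamma_{\mathcal C^\ast}\le\gamma_{\mathcal C\setminus\{c^\ast\}}+\gamma_{c^\ast}\le\sum_{c\in\mathcal C}\gamma_c$. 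Applying this with $\mathcal C=\Phi_{M_I}$ and invoking part~\ref{p3:nec+suf} closes the argument.

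The main obstacle I anticipate is this last combinatorial inequality: the inclusion–exclusion structure of $\gamma_{M_I}$ is not termwise sign-definite, so the bound cannot be read off directly and genuinely requires the deletion identity together with the nonnegativity invariant, which itself is only available in the regime $\sum_c\gamma_c\le1$. Getting the bookkeeping of disjoint cycle subcollections correct in the deletion identity is the delicate point; parts~\ref{p3:nec+suf} and~\ref{p1:nec} should be comparatively straightforward once Lemmas~\ref{lemma:equivalentchar} and~\ref{lemma:Metzlerdeterminant} are in hand.
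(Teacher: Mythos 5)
Your proposal is correct, and it diverges from the paper's own proof in two substantive ways. Part~\ref{p3:nec+suf} is identical in both: Lemma~\ref{lemma:Metzlerdeterminant} plus the leading-principal-minor test of Lemma~\ref{lemma:equivalentchar}\ref{p3:minor}. For part~\ref{p1:nec}, however, the paper does \emph{not} use the scaling vector $\xi$; it builds a graph-expansion machine (Section~\ref{sec:schurexpansion}): a node is inserted on every edge of $\mathcal{G}(M)$ with weight $\sqrt{m_{ji}}$ and diagonal $-1$, Hurwitzness of $M$ and of the expanded matrix are shown equivalent via iterated Schur complements (Lemma~\ref{lemma:schurcomplements}), and then every simple cycle of the expanded graph appears, after a permutation, as a leading principal submatrix whose minor must be positive, forcing $\gamma_c<1$. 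Your telescoping argument from Lemma~\ref{lemma:equivalentchar}\ref{p4:right} — each term of $\sum_{j\in\mathcal{N}_i}\frac{m_{ij}}{-m_{ii}}\frac{\xi_j}{\xi_i}<1$ is individually less than $1$, and the $\xi$-ratios cancel around a cycle — is shorter, entirely elementary, and in fact mirrors the technique the paper itself uses in the proof of Theorem~\ref{thm:equivalence} (i)$\implies$(ii); what the paper's longer route buys is the expansion algorithm itself as reusable content. For part~\ref{p2:suf}, both proofs reduce to part~\ref{p3:nec+suf}, but the combinatorial core differs: the paper groups the inclusion--exclusion sum \eqref{eq:matrixgain_sub} into consecutive even/odd pairs and asserts each pair $\sum_{\ell=2k}^{2k+1}\sum_{K^{M_I}_\ell}(-1)^{\ell-1}\gamma_{c_{i_1}}\cdots\gamma_{c_{i_\ell}}$ is negative, justified only by the remark that every $(\ell)$-element disjoint family has all its $(\ell-1)$-element subsets in $K_{\ell-1}^{M_I}$ — a counting/domination step left quite terse. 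Your deletion identity $\gamma_{\mathcal C}=\gamma_{\mathcal C\setminus\{c^\ast\}}+\gamma_{c^\ast}\bigl(1-\gamma_{\mathcal C^\ast}\bigr)$ (which is easily verified by splitting disjoint subcollections according to whether they contain $c^\ast$), combined with the inductive invariant $0\le\gamma_{\mathcal C}\le\sum_{c\in\mathcal C}\gamma_c$ valid whenever the total gain is at most $1$, yields $\gamma_{M_I}\le\sum_{c\in\Phi_{M_I}}\gamma_c<1$ with all bookkeeping explicit; it is arguably a cleaner and more rigorous version of the step the paper compresses. One small point of care in your part~\ref{p1:nec}: the conclusion $\gamma_c<1$ from multiplying the $k$ factors uses that each factor lies in $[0,1)$, which holds here since the cycle edges have strictly positive weights — worth stating, but not a gap.
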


\begin{proof}
	Regarding part~\ref{p1:nec}, we postpone the proof to Section~\ref{sec:schurexpansion}, where an expansion algorithm for $\mathcal{G}(M)$ is given  so that all the simple cycles can be identified by the leading principal submatrices and a simple proof is constructed.

	Regarding part~\ref{p2:suf}, we prove the result by showing that Theorem~\ref{thm:HurwitzMetzler}\ref{p3:nec+suf} holds. For all $i\in\{1,\dots,n\}$ and $I=\{1,\dots,i\}$, the leading submatrix $M_{I}$ only involves a subset of $\Phi$. If $\Phi_{M_I}$ is empty, then $\gamma_{M_I}=0<1$. Otherwise, from \eqref{eq:matrixgain_sub}, we know that $\gamma_{M_{I}}$ has the following form:
	\begin{align*}
	\gamma_{M_{I}}&=\sum_{\{c_{i_1}\}\in K_1^{M_I}}\gamma_{c_{i_1}}-\sum_{\{c_{i_1},c_{i_2}\}\in K_2^{M_I}}\gamma_{c_{i_1}}\gamma_{c_{i_2}}+\sum_{\{c_{i_1},c_{i_2},c_{i_3}\} \in K^{M_I}_3}  \gamma_{c_{i_1}} \gamma_{c_{i_2}} \gamma_{c_{i_3}}\\
	&\quad+\sum_{\ell=3}^{r_{M_I}}\sum_{\{c_{i_1},\ldots,c_{i_\ell}\} \in K^{M_I}_\ell} (-1)^{\ell-1} \gamma_{c_{i_1}}\ldots \gamma_{c_{i_\ell}}.
	\end{align*}
	Since for all $c\in \Phi$, we have $\gamma_{c}>0$ and $\sum_{c\in \Phi}\gamma_{c}<1$ by assumption, then we have that $\gamma_{c}<1$ for all $c\in \Phi$ and $\sum_{\{c_{i_1}\}\in K_1^{M_I}}\gamma_{c_{i_1}}<1$. Note that by the definition of $K_\ell^{M_I}$,  for any $\{c_{i_1},\dots,c_{i_\ell}\}\in K_\ell^{M_I}$, we must have that all the subsets of $\{c_{i_1},\dots,c_{i_\ell}\}$  with $\ell-1$ elements are contained in $K_{\ell-1}^{M_I}$. Thus, we have that, for all $k\geq 1$,
	\begin{equation*}
	\sum_{\ell=2k}^{2k+1}\sum_{\{c_{i_1},\ldots,c_{i_\ell}\} \in K^{M_I}_\ell} (-1)^{\ell-1} \gamma_{c_{i_1}}\ldots \gamma_{c_{i_\ell}}<0.
	\end{equation*}
	Hence, we have for all $i\in\{1,\dots,n\}$ and $I=\{1,\dots,i\}$, $\gamma_{M_I}<1$, and by Theorem~\ref{thm:HurwitzMetzler}\ref{p3:nec+suf}, $M$ is Hurwitz.
	
	Regarding part~\ref{p3:nec+suf}, by Lemma~\ref{lemma:Metzlerdeterminant}, we have that for $i\in\{1,\dots,n\}$ and $I=\{1,\dots,i\}$,
	\begin{equation*}
	\det((-M)_I) = (\prod_{j=1}^i(-m_{jj}))(1-\gamma_{M_I}).
	\end{equation*}
	By Lemma~\ref{lemma:equivalentchar}\ref{p3:minor}, $M$ is Hurwitz if and only if for all $i\in\{1,\dots,n\}$ and $I=\{1,\dots,i\}$, $\det((-M)_I)>0$ , i.e.,
	\begin{align*}
	(\prod_{j=1}^i(-m_{jj}))(1-\gamma_{M_{I}})>0,
	\end{align*}
	which is equivalent to $\gamma_{M_{I}}<1$.
\end{proof}

\begin{remark}[Necessary and sufficient condition in special graphs]
	The sufficient condition for Hurwitzness in
	Theorem~\ref{thm:HurwitzMetzler}\ref{p2:suf} becomes necessary and
	sufficient when any two cycles share at least one common node in the
	digraph associated with the Metzler matrix.
\end{remark}

We give two simple examples illustrating that the condition in Theorem~\ref{thm:HurwitzMetzler}\ref{p1:nec} is not sufficient and the condition in Theorem~\ref{thm:HurwitzMetzler}\ref{p2:suf} is not necessary.

\begin{example}[Insufficiency of condition~\ref{p1:nec} in Theorem~\ref{thm:HurwitzMetzler}]
	Consider an irreducible Metzler matrix $M\in\real^{3\times3}$ as follows
	\begin{equation*}
	M=\begin{bmatrix}
	-1&1&0\\
	1&-2&1\\
	0&1&-1\\
	\end{bmatrix}.
	\end{equation*}
	The associated weighted digraph $\mathcal{G}(M)$ is shown in Fig.~\ref{fig:example2}. There are two cycles in $\mathcal{G}(M)$, i.e., $c_1=(1,2,1)$ and $c_2=(2,3,2)$, and the cycle gains are $\gamma_{c_1}=\gamma_{c_2}=\frac{1}{2}$. The cycle gains satisfy the condition in Theorem~\ref{thm:HurwitzMetzler}\ref{p1:nec}, but $M$ is not Hurwitz since it has a zero eigenvalue.
	\begin{figure}[http]
		\centering
		\vspace{-10pt}
		\begin{tikzpicture}
		% Draw the states
		\node[state,minimum size = 0.2cm] at (0, 0) (nodeone) {{$1$}};
		\node[state,minimum size = 0.2cm] at (2, 0)     (nodetwo)     {{$2$}};
		\node[state,minimum size = 0.2cm] at (4, 0)     (nodethree)     {{$3$}};
		% Connect the states with arrows
		\draw[every loop,
		auto=right,
		>=latex,
		]
		(nodetwo)     edge[bend left=15, auto=left] node {$1$} (nodeone)
		(nodeone) edge[bend left=15, auto=left] node {$1$} (nodetwo)
		(nodethree)     edge[bend left=15,auto=left]            node {$1$} (nodetwo)
		(nodeone) edge[loop left] node {$-1$} (nodeone)
		(nodetwo)     edge[bend left=15,auto=left]            node {$1$} (nodethree)
		(nodetwo) edge[loop above] node {$-1$} (nodetwo)
		(nodethree) edge[loop right] node {$-1$} (nodethree)
		;
		\end{tikzpicture}\caption{The associated weighted digraph of $M$}\label{fig:example2}
	\end{figure}
	
\end{example}

\begin{example}[Lack of necessity of condition~\ref{p2:suf} in Theorem~\ref{thm:HurwitzMetzler}]
	Consider an irreducible Metzler matrix $M\in\real^{4\times4}$ as
        follows
	\begin{equation*}
	M=\begin{bmatrix}
	-5&1&0&0\\
	3&-1&1&0\\
	0&1&-5&1\\
	0&0&1&-1
	\end{bmatrix}.
	\end{equation*}
	The associated weighted digraph $\mathcal{G}(M)$ is shown in Fig.~\ref{fig:example3}. There are three cycles in $\mathcal{G}(M)$, i.e., $c_1=(1,2,1)$, $c_2=(2,3,2)$ and $c_3=(3,4,3)$, and the cycle gains are $\gamma_{c_1}=\frac{3}{5}$, $\gamma_{c_2}=\frac{1}{5}$ and $\gamma_{c_3}=\frac{1}{5}$. The cycle gains do not satisfy the sufficient condition in Theorem~\ref{thm:HurwitzMetzler}\ref{p2:suf}, but one can check that $M$ is Hurwitz.
	\begin{figure}[http]
		\centering
		\vspace{-10pt}
		\begin{tikzpicture}
		% Draw the states
		\node[state,minimum size = 0.2cm] at (0, 0) (nodeone) {{$1$}};
		\node[state,minimum size = 0.2cm] at (2, 0)     (nodetwo)     {{$2$}};
		\node[state,minimum size = 0.2cm] at (4, 0)     (nodethree)     {{$3$}};
		\node[state,minimum size = 0.2cm] at (6, 0)     (nodefour)     {{$4$}};
		% Connect the states with arrows
		\draw[every loop,
		auto=right,
		>=latex,
		]
		(nodetwo)     edge[bend left=15, auto=left] node {$1$} (nodeone)
		(nodeone) edge[bend left=15, auto=left] node {$3$} (nodetwo)
		(nodethree)     edge[bend left=15,auto=left]            node {$1$} (nodetwo)
		
		(nodetwo)     edge[bend left=15,auto=left]            node {$1$} (nodethree)
		(nodethree)     edge[bend left=15,auto=left]            node {$1$} (nodefour)
		
		(nodefour)     edge[bend left=15,auto=left]            node {$1$} (nodethree)

		(nodeone) edge[loop left] node {$-5$} (nodeone)
		(nodefour) edge[loop right] node {$-1$} (nodefour)
		(nodetwo) edge[loop above] node {$-1$} (nodetwo)
		(nodethree) edge[loop above] node {$-5$} (nodethree)
		
		;
		\end{tikzpicture}\caption{The associated weighted digraph of $M$}\label{fig:example3}
	\end{figure}
	
\end{example}

We give the Hurwitzness conditions for Example~\ref{exam:concepts}.
\begin{example}[continues=exam:concepts]
	By Theorem~\ref{thm:HurwitzMetzler}\ref{p3:nec+suf} and \eqref{eq:examgammaMI}, the necessary and sufficient conditions for $M$ to be Hurwitz are given by
	\begin{align*}
	&\gamma_{c_1}<1,\qquad \gamma_{c_1}+\gamma_{c_2}<1,\\
	&\gamma_{c_1}+\gamma_{c_2}+\gamma_{c_3}-\gamma_{c_1}\gamma_{c_3}-\gamma_{c_2}\gamma_{c_3}<1,\qquad \gamma_M<1,
	\end{align*}
	which are equivalent to
	\begin{align}
	&\gamma_{c_1}+\gamma_{c_2}<1,\label{eq:firstcon}\\
	&\gamma_{c_1}+\gamma_{c_2}+\gamma_{c_3}-\gamma_{c_1}\gamma_{c_3}-\gamma_{c_2}\gamma_{c_3}<1,\label{eq:secondcond}\\
	&\gamma_M<1.\label{eq:thirdcond}
	\end{align}
	It is not obvious whether the necessary conditions in Theorem~\ref{thm:HurwitzMetzler}\ref{p1:nec} hold in this example. We show that \eqref{eq:firstcon}-\eqref{eq:thirdcond} imply those necessary conditions in the following. From \eqref{eq:firstcon}, since the cycle gains are positive, we know that $\gamma_{c_1}<1$ and $\gamma_{c_2}<1$. We can rewrite~\eqref{eq:secondcond} as follows
	\begin{equation*}
	\gamma_{c_3}(1-\gamma_{c_1}-\gamma_{c_2})<1-\gamma_{c_1}-\gamma_{c_2},
	\end{equation*}
	which along with \eqref{eq:firstcon} imply that $\gamma_{c_3}<1$. By using~\eqref{eq:examgammaMI}, we can rearrange~\eqref{eq:thirdcond} as follows
	\begin{equation*}
	\gamma_{c_1}(1-\gamma_{c_3})+\gamma_{c_2}+\gamma_{c_3}-\gamma_{c_2}\gamma_{c_3}+\gamma_{c_5}+\gamma_{c_4}(1-\gamma_{c_2})(1-\gamma_{c_3})<1,
	\end{equation*}
	which is equivalent to
	\begin{equation}\label{eq:examinterme}
	\gamma_{c_1}(1-\gamma_{c_3})+\gamma_{c_5}<(1-\gamma_{c_4})(1-\gamma_{c_2})(1-\gamma_{c_3}).
	\end{equation}
	Since all the terms on the left hand side of~\eqref{eq:examinterme} are positive, and on the right hand side we have $\gamma_{c_2}<1$ and $\gamma_{c_3}<1$, thus we must have that $\gamma_{c_4}<1$. At the same time, since the term on the right hand side of~\eqref{eq:examinterme} is less than $1$, we must have that $\gamma_{c_5}<1$.
\end{example}

%To complete the treatment, we conclude this section with a known result regarding the gain matrix.
%\begin{lemma}[Spectral radius condition on the gain matrix {\cite[Lemma 3.1]{SD-HI-FRW:11}}]
%	Let $M\in\real^{n\times n}$ be a Metzler matrix with negative diagonal
%	elements and define its associated \emph{gain matrix}
%	$\bm{\Gamma}\in\real^{n\times n}$ by
%	\begin{equation*}
%	\bm{\Gamma}_{ij}=\begin{cases}
%	0,&\quad \textup{if } i= j,\\
%	\frac{m_{ij}}{-m_{ii}},&\quad \textup{if } i\neq j.
%	\end{cases}
%	\end{equation*}
%	Then $M$ is Hurwitz if and only if the spectral radius of $\bm{\Gamma}$ is less than $1$.
%\end{lemma}

\section{Graph-theoretic conditions for stability of nonlinear
	monotone systems}\label{sec:nonlinear}
In this section, we extend our stability results to
monotone nonlinear systems.  
%Suppose the interaction
% between subsystems is described by a directed graph $\mathcal{G}=(V,\mathcal{E})$, where $V=\{1,\dots,n\}$ is the set of nodes and for all $i,j\in V$ and $i\neq j$, $(j,i)\in\mathcal{E}$ if $x_j$ is an input to subsystem $i$. 
We consider a network of $n$ interconnected
dynamical systems with the interconnection graph $\mathcal{G}$:
\begin{align}\label{eq:nonlinear}
\dot{x}_i = f_i (x_i , x_{\mathcal{N}_i}),\quad \textup{for all } i\in\{1,\ldots,n\},
\end{align}
where $x_i\in\real$ and $x_{\mathcal{N}_i}=\begin{bmatrix}x_{i_1},\ldots,x_{i_{k_i}}
\end{bmatrix}^\top\in\real^{|{\mathcal{N}_i}|}$ with $\mathcal{N}_i=\{i_1,\ldots,i_k\}$. For every $i\in \{1,\ldots,n\}$, the
function $\map{f_i}{\real^{|\mathcal{N}_i|+ 1}}{\real}$ is
continuously differentiable.  We assume that the interconnected system~\eqref{eq:nonlinear} is
monotone, i.e., for every $x\in \real^{n}_{\ge 0}$, the Jacobian matrix
$J(x)$ is Metzler. Moreover, we assume that $f(\vect{0}_n)
=\vect{0}_n$. We show that our characterizations of stability for linear Metzler
systems can be generalized to sufficient conditions for global
stability of nonlinear monotone systems. In particular, we prove two global results
for asymptotic stability of monotone interconnected networks based
on the max-interconnection gains and the
sum-interconnection gains.

\begin{theorem}[Max-interconnection stability]\label{thm:GAS-max-interconnected}
	Consider an interconnected nonlinear system~\eqref{eq:nonlinear} evolving on the positive orthant
	$\real^{n}_{\ge 0}$ with the interconnection
	graph $\mathcal{G}=(V,\mathcal{E})$. Assume that $f(\vect{0}_n)=\vect{0}_n$, and for
	every $x\in \real^{n}_{\ge 0}$, the matrix $J(x)$ is Metzler with
	negative diagonal entries. Moreover, assume there exists a family of
	positive numbers $\{\psi_{ij}\}$ for $i\in V$ and $j\in\mathcal{N}_i$ such that:
	\begin{enumerate}
		\item\label{c1} for every $i\in \{1,\ldots,n\}$,
		\begin{align}\label{eq:jacobian-condition}
		\sum_{j\in\mathcal{N}_i} \frac{J_{ij} (x)}{-J_{ii}(x)}
		\psi^{-1}_{ij} < 1,\quad\textup{for all } x\in \real^n_{\ge 0},
		\end{align}
		\item\label{c2} for every $c = (i_1,\ldots,i_k,i_1)\in \Phi$,
		\begin{align*}
		\psi_{i_2i_1} \ldots \psi_{i_1 i_k} < 1.
		\end{align*}
	\end{enumerate}
	Then $\vect{0}_n$ is globally asymptotically stable  for system~\eqref{eq:nonlinear}.
\end{theorem}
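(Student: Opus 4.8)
The plan is to mirror the linear max-interconnection argument of Theorem~\ref{thm:equivalence}: I would show that each scalar subsystem of~\eqref{eq:nonlinear}, with its in-neighbors $x_{\mathcal{N}_i}$ treated as inputs, is ISS with the \emph{linear} max-interconnection gains $\Psi_{ij}(s)=\psi_{ij}s$, and then invoke the cyclic small-gain theorem (Lemma~\ref{lemma:SGG}), whose cycle hypothesis~\eqref{eq:gaincompo} is exactly condition~\ref{c2}. Because the network carries no exogenous input, the resulting ISS of the interconnection collapses to global asymptotic stability of $\vect{0}_n$.

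First I would record two structural facts. The orthant $\real^n_{\ge 0}$ is forward invariant: on the face $\{x_i=0\}$ one has $\dot x_i=f_i(x)=\int_0^1\sum_{j\in\mathcal{N}_i}J_{ij}(sx)\,x_j\,ds\ge 0$, using $f(\vect{0}_n)=\vect{0}_n$, $J_{ij}\ge 0$ for $j\neq i$, and $x_j\ge 0$; hence trajectories remain where the Jacobian hypotheses are in force. Moreover, by the fundamental theorem of calculus and $f(\vect{0}_n)=\vect{0}_n$, each subsystem admits the \emph{state-averaged} representation
\begin{equation*}
\dot x_i = a_{ii}(x)\,x_i + \sum_{j\in\mathcal{N}_i} a_{ij}(x)\,x_j,\qquad a_{ii}(x)=\int_0^1 J_{ii}(sx)\,ds,\qquad a_{ij}(x)=\int_0^1 J_{ij}(sx)\,ds,
\end{equation*}
with $a_{ii}(x)<0$ and $a_{ij}(x)\ge 0$, which recasts the subsystem as a Metzler-type scalar system with state-dependent coefficients.

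The crucial observation is that hypothesis~\ref{c1} integrates cleanly. Rearranging~\eqref{eq:jacobian-condition} as $\sum_{j\in\mathcal{N}_i}J_{ij}(y)\psi_{ij}^{-1}+J_{ii}(y)<0$ for every $y\in\real^n_{\ge 0}$ and integrating this pointwise inequality along the ray $y=sx$, $s\in[0,1]$, gives $\sum_{j\in\mathcal{N}_i}a_{ij}(x)\psi_{ij}^{-1}+a_{ii}(x)<0$; that is, the averaged coefficients satisfy the same max-gain inequality~\eqref{eq:maxgain_M} invoked in Lemma~\ref{lemma:ISS-gains-Metzler}\ref{p2:max-inter}. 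Taking $V_i(x_i)=x_i$ as a candidate ISS-Lyapunov function, whenever $x_i\ge (1+\rho)\psi_{ij}x_j$ for all $j\in\mathcal{N}_i$ the averaged representation yields
\begin{equation*}
\dot x_i \le x_i\Big(a_{ii}(x)+\tfrac{1}{1+\rho}\textstyle\sum_{j\in\mathcal{N}_i}a_{ij}(x)\psi_{ij}^{-1}\Big)\le \tfrac{\rho}{1+\rho}\,a_{ii}(x)\,x_i<0,
\end{equation*}
the gain-margin dissipation inequality certifying that subsystem $i$ is ISS with gain $\Psi_{ij}(s)=\psi_{ij}s$. Feeding these gains into Lemma~\ref{lemma:SGG} and reading~\ref{c2} as the cycle condition then delivers ISS of the interconnection, hence GAS.

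The hard part is the final implication of the subsystem step: the bound above certifies only strict negativity of $\dot x_i$ with a rate $\tfrac{\rho}{1+\rho}a_{ii}(x)$ that is pointwise negative but need not be bounded away from zero as $|x|\to\infty$, so upgrading it to a genuine $\mathcal{KL}$ decay over the unbounded orthant (rather than mere monotone decrease) requires care. I would resolve this by exploiting monotonicity and forward invariance---dominating $x(t)$ by the solution of the majorizing cooperative comparison system built from the $a_{ij}$ via a quasimonotone comparison argument---or, equivalently, bypass the ISS route with a direct weighted-max Lyapunov function $V(x)=\max_i x_i/\xi_i$, where positive potentials $\xi_i$ with $\psi_{ij}\le \xi_i/\xi_j$ are extracted from~\ref{c2} by the standard no-positive-cycle argument paralleling Theorem~\ref{thm:equivalence}; the averaged gain inequality then gives $D^+V(x(t))<0$ for $x\neq\vect{0}_n$, from which global asymptotic stability of $\vect{0}_n$ follows.
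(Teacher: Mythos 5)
Your overall architecture is the same as the paper's: treat each scalar subsystem as ISS with linear max-interconnection gains $\Psi_{ij}(s)=\psi_{ij}s$, feed these gains into the cyclic small-gain theorem (Lemma~\ref{lemma:SGG}) with the cycle-product hypothesis playing the role of~\eqref{eq:gaincompo}, and conclude GAS from ISS with zero exogenous input (the paper formally appends a dummy input via $\dot{x}=f(x)+\vect{0}_{n\times n}u$ for exactly this purpose). You also correctly isolate the crux: a gain-margin dissipation inequality like $\dot x_i\le\tfrac{\rho}{1+\rho}a_{ii}(x)x_i$ gives only pointwise decrease, whereas Lemma~\ref{lemma:SGG} consumes a trajectory estimate with a genuine $\mathcal{KL}$ transient, and the rate $a_{ii}(x)$ need not be bounded away from zero on the unbounded orthant. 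The paper resolves precisely this point: it uses the mean-value form $f_i(x)=J_{ii}(\xi_x)x_i+\sum_{j\in\mathcal{N}_i}J_{ij}(\xi_x)x_j$ (your integral representation $a_{ij}(x)=\int_0^1 J_{ij}(sx)\,ds$ is an equivalent substitute), defines $\delta(c)=\min_{x\in B(c)}\min_i\bigl(-J_{ii}(x)-\sum_{j\in\mathcal{N}_i}J_{ij}(x)\psi_{ij}^{-1}\bigr)>0$ over the compact box $B(c)=\{x\in\real^n_{\ge0}\,:\,x\le 2c\mathbb{1}_n\}$, takes $\beta(s,t)=se^{-\delta(s)t}$ with the decay rate tied to the initial condition, and establishes the max-form estimate by contradiction at the first violation time, splitting into the cases $x_i(t^*)=\beta(x_i(0),t^*)$ and $x_i(t^*)>\beta(x_i(0),t^*)$. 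Your completion (b) is a genuinely different finish: extracting potentials $\xi_i>0$ with $\psi_{ij}\le\xi_i/\xi_j$ from the cycle condition (feasible by the difference-constraints/no-positive-cycle argument, mirroring the proof of Theorem~\ref{thm:equivalence}) and running the weighted-max Lyapunov function $V(x)=\max_i x_i/\xi_i$ bypasses the small-gain machinery entirely, in the spirit of max-separable Lyapunov functions for monotone systems; it buys a self-contained argument, and your explicit forward-invariance check is a detail the paper leaves implicit.

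Two caveats. First, in route (b) the step ``$D^+V<0$ for $x\neq\vect{0}_n$, hence GAS'' hides the very uniformity issue you flagged: one must still rule out $V(x(t))\to c>0$, which requires noting that $V$ is radially unbounded, that its sublevel sets are compact and forward invariant, and that on the compact annulus $\{c\le V\le V(x(0))\}$ the strict inequality~\eqref{eq:jacobian-condition} is uniform by continuity --- i.e., the same compactness device the paper packages into $\delta(\cdot)$. Second, route (a) as stated is not a proof: since $J_{ij}$ may be unbounded on $\real^n_{\ge0}$, a single constant-coefficient Hurwitz Metzler majorant need not exist globally, and monotonicity of the flow alone does not produce a decaying comparison solution. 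So: same skeleton as the paper, crux correctly identified, and your completion (b) --- once the compactness step is spelled out --- is a correct alternative to the paper's contradiction argument.
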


\begin{proof}
	Given $c>0$, we define the set $B(c)$ and the real number $\delta(c)$
	as follows:
	\begin{align*}
	B(c) &= \setdef{x\in \real^n_{\ge 0}}{x\leq 2c\mathbb{1}_n },\\
	\delta(c) &= \min_{x\in B(c)}\min_{i} \left(-J_{ii}(x) - \sum_{j\in\mathcal{N}_i}
	J_{ij}(x) \psi^{-1}_{ij}\right).
	\end{align*}
	Since $B(c)$ is a compact set and (\ref{eq:jacobian-condition}) holds, we have that $\delta(c)>0$. Let $\beta: \real_{\ge
		0}\times \realnonnegative\mapsto \real$ be a class $\mathcal{KL}$ function given by $\beta(s, t) = s e^{-\delta(s) t}$, where $\delta(s)>0$ is a nonincreasing function with respect to $s$. Consider the control system
	\begin{align}\label{eq:control}
	\dot{x} = f(x) + \vect{0}_{n\times n}u,
	\end{align}
	where $u\in \real^n_{\ge 0}$. We first show that, for every $t\ge 0$ and every $i\in \{1,\ldots,n\}$,
	\begin{align}\label{eq:nice_inequality_by_saber_nonlinear}
	x_i(t) \le \max_{j} \{\beta(x_i(0),t),
	\psi_{ij}\|x_j\|_{[0,t]}, \|u_i\|_{\infty}\}.
	\end{align}
	Suppose that the statement~\eqref{eq:nice_inequality_by_saber_nonlinear} is not true. Therefore, there
	exist $i\in \{1,\ldots,n\}$, $t^*\ge 0$, and $\epsilon>0$ such that
	\begin{align}\label{eq:equality}
	x_i(t^*) = \max_{j} \{\beta(x_i(0),t^*),
	\psi_{ij}\|x_j\|_{[0,t^*]}, \|u_i\|_{\infty}\},
	\end{align}
	and for every $t\in (t^*,t^*+\epsilon)$,
	\begin{align}\label{eq:inequality}
	x_i(t) > \max_{j} \{\beta(x_i(0),t), \psi_{ij}\|x_j\|_{[0,t]}, \|u_i\|_{\infty}\}.
	\end{align}
	Since $\real^n_{\ge 0}$ is convex, by the Mean Value
	Theorem~\cite[Proposition 2.4.7]{RA-JEM-TSR:88}, there exists
	$\vect{0}_n\le \xi_x\le x$ such that
	\begin{align}\label{eq:mean-value}
	f_i(x) = J_{ii}(\xi_x) x_{i} + \sum_{j\in\mathcal{N}_i} J_{ij}(\xi_x) x_{j}.
	\end{align}
	By~\eqref{eq:equality}
	and~\eqref{eq:inequality}, we have that, for every $j$ such that
	$(j,i)\in \mathcal{E}$ and every $t\in [t^*,t^*+\epsilon)$, we have $\|x_j\|_{[0,t]} \le
	\psi^{-1}_{ij}x_i(t)$. Therefore, by~\eqref{eq:mean-value},
	we have
	\begin{align}\label{eq:nice_inequality_by_saber}
	\dot{x}_i(t)=f_i(x) \le \left(J_{ii}(\xi_{x(t)}) + \sum_{j\in\mathcal{N}_i}J_{ij}(\xi_{x(t)})\psi_{ij}^{-1}\right) x_i(t).
	\end{align}
	We consider two cases in the following.
	\begin{enumerate}
		\item $x_i(t^*)=\beta(x_i(0),t^*)$: In this case, for small enough
		$\epsilon$ and, for every $t\in [t^*,t^*+\epsilon)$, we have $\xi_{x(t)}\in
		B(x_i(0))$. Thus, by~\eqref{eq:nice_inequality_by_saber}, we have
		\begin{align*}
		\dot{x}_i(t) \le -\delta(x_i(0)) x_i(t),
		\end{align*}
		which implies that $x_i(t) \le
		e^{-\delta(x_i(0))(t-t^*)}x_i(t^*)$. Thus, along with~\eqref{eq:equality}, we have, for every $t\in [t^*,t^*+\epsilon)$,
		\begin{align*}
		x_i(t) &\le  e^{-\delta(x_i(0))(t-t^*)}x_i(t^*)\\
		&=e^{-\delta(x_i(0))t}x_i(0)\\
		& \le \max_j\{\beta(x_i(0),t),\|x_j\|_{[0,t]},\|u_i\|_{\infty}\},
		\end{align*}
		which is contradictory to~\eqref{eq:inequality}.
		\item $x_i(t^*) > \beta(x_i(0),t^*)$: In this case, we have $x_i(t^*)> x_i(0)
		e^{-\delta(x_i(0))t^*}$ and therefore
		\begin{align*}
		x_i(t^*) = \max_j\{\psi_{ij}\|x_j\|_{[0,t^*]},
		\|u_i\|_{\infty}\}.
		\end{align*}
		By~\eqref{eq:nice_inequality_by_saber}, we have $\dot{x}_i(t) \le
		0$ for every $t\in [t^*,t^*+\epsilon)$. Since $\|x_j\|_{[0,t]}$ is nondecreasing
		with respect to $t$, for every $t\in
		[t^*,t^*+\epsilon)$,
		\begin{equation*}
		x_i(t) \le  \max_j\{\psi_{ij}\|x_j\|_{[0,t]},
		\|u_i\|_{\infty}\} \le \max_j \{\beta(x_i(0),t),\psi_{ij}\|x_j\|_{[0,t]},
		\|u_i\|_{\infty}\},
		\end{equation*}
		which is contradictory to~\eqref{eq:inequality}.
	\end{enumerate}
	In both cases, we have a
	contradiction. Therefore, for every $t\ge 0$ and every $i\in
	\until{n}$, $x_i(t)$ satisfies~\eqref{eq:nice_inequality_by_saber_nonlinear}.
	Moreover, Theorem~\ref{thm:GAS-max-interconnected}\ref{c2} ensures that $\{\psi_{ij}\}_{(i,j)\in
		\mathcal{E}}$ satisfies $\psi_{c} <1$, for every $c\in
	\Phi$. Therefore, by cyclic small-gain theorem \ref{lemma:SGG}, the control system~\eqref{eq:control} is ISS, which implies that    $\vect{0}_n$ is globally asymptotically stable for nonlinear dynamical
	system~\eqref{eq:nonlinear}.
\end{proof}

\begin{theorem}[Sum-interconnection stability]\label{thm:GAS-sum-interconnected}
	Consider an interconnected nonlinear system~\eqref{eq:nonlinear} evolving on the positive orthant
	$\real^{n}_{\ge 0}$ with the interconnection
	graph $\mathcal{G}=(V,\mathcal{E})$. Assume that $f(\vect{0}_n)=\vect{0}_n$, and for
	every $x\in \real^{n}_{\ge 0}$, the matrix $J(x)$ is Metzler with
	negative diagonal entries. Moreover, assume there exists a family of
	positive numbers $\{\gamma_{ij}\}$ for $i\in V$ and $j\in\mathcal{N}_i$ such that:
	\begin{enumerate}
		\item\label{c1:bound} for every $i\in \{1,\ldots,n\}$,
		\begin{align*}
		\frac{J_{ij} (x)}{-J_{ii}(x)} \le \gamma_{ij} ,\quad\textup{for all } x\in \real^n_{\ge 0},
		\end{align*}
		\item\label{c2:cyclegain} for every $i\in \{1,\ldots,n\}$ and $I=\{1,\dots,i\}$,
		\begin{align*}
		\gamma_{M_I}< 1,
		\end{align*}
		where the Metzler matrix $M$ is defined as, for $i',j'\in V$
		\begin{align*}
		m_{i'j'} =
		\begin{cases}
		\gamma_{i'j'}, \qquad &\textup{if } (j',i')\in \mathcal{E},\\
		-1, &\textup{if } i'=j',\\
		0, &\textup{otherwise}.
		\end{cases}
		\end{align*}
	\end{enumerate}
	Then $\vect{0}_n$ is globally asymptotically stable for system~\eqref{eq:nonlinear}.
\end{theorem}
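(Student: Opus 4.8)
The plan is to reduce the claim to the already-established max-interconnection result, Theorem~\ref{thm:GAS-max-interconnected}, using the two \emph{linear} characterizations as a bridge. First I would observe that hypothesis~\ref{c2:cyclegain} is \emph{verbatim} the condition of Theorem~\ref{thm:HurwitzMetzler}\ref{p3:nec+suf} applied to the auxiliary Metzler matrix $M$: its diagonal entries equal $-1<0$, its off-diagonal entries $m_{i'j'}=\gamma_{i'j'}\ge 0$ make it Metzler, and $\gamma_{M_I}<1$ is assumed for every leading principal index set $I=\{1,\dots,i\}$. Hence Theorem~\ref{thm:HurwitzMetzler}\ref{p3:nec+suf} yields that $M$ is Hurwitz. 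Note also that $\mathcal{G}(M)=\mathcal{G}$ and $m_{i'j'}/(-m_{i'i'})=\gamma_{i'j'}$, so $M$ records exactly the graph and the gain bounds of the nonlinear system.

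Next I would convert this sum-type certificate into a max-type certificate. Since $M$ is a Hurwitz Metzler matrix with $m_{ij}/(-m_{ii})=\gamma_{ij}$, the implication \ref{p1:hurmax}$\Rightarrow$\ref{p2:SGT} of Theorem~\ref{thm:equivalence} supplies positive scalars $\{\psi_{ij}\}$ with $\sum_{j\in\mathcal{N}_i}\gamma_{ij}\psi_{ij}^{-1}<1$ for every $i$ and $\psi_c<1$ for every simple cycle $c\in\Phi$. Concretely these arise from a vector $\xi>\mathbb{0}_n$ satisfying $M\xi<\mathbb{0}_n$ (Lemma~\ref{lemma:equivalentchar}\ref{p4:right}): taking $\psi_{ij}=\delta\,\xi_i/\xi_j$ with $\max_i\frac{\sum_{j}\gamma_{ij}\xi_j}{\xi_i}<\delta<1$ makes both inequalities strict, because along any cycle the ratios $\xi_{i_{\ell+1}}/\xi_{i_\ell}$ telescope to $1$, so $\psi_c$ reduces to a positive power of $\delta$ and is therefore strictly below $1$.

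The final step transfers these constants from $M$ to the Jacobian. By hypothesis~\ref{c1:bound}, $J_{ij}(x)/(-J_{ii}(x))\le\gamma_{ij}$ for all $x\in\real^n_{\ge 0}$, whence $\sum_{j\in\mathcal{N}_i}\frac{J_{ij}(x)}{-J_{ii}(x)}\psi_{ij}^{-1}\le\sum_{j\in\mathcal{N}_i}\gamma_{ij}\psi_{ij}^{-1}<1$ for every $i$ and every $x$; this is precisely hypothesis~\ref{c1} of Theorem~\ref{thm:GAS-max-interconnected}, while $\psi_c<1$ for all $c\in\Phi$ is precisely hypothesis~\ref{c2}. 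Invoking Theorem~\ref{thm:GAS-max-interconnected} with this family $\{\psi_{ij}\}$ then yields that $\mathbb{0}_n$ is globally asymptotically stable for~\eqref{eq:nonlinear}.

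I expect the main delicate point to be bookkeeping rather than any hard estimate, specifically the irreducibility assumption: Theorems~\ref{thm:HurwitzMetzler} and~\ref{thm:equivalence} are stated for irreducible matrices, yet $\mathcal{G}$ need not be strongly connected here. This should be harmless, since the only implication I use from Theorem~\ref{thm:equivalence}, namely \ref{p1:hurmax}$\Rightarrow$\ref{p2:SGT}, rests solely on the existence of $\xi>\mathbb{0}_n$ with $M\xi<\mathbb{0}_n$, which holds for every Hurwitz Metzler $M$ by Lemma~\ref{lemma:equivalentchar}\ref{p4:right} without any strong-connectivity requirement; if desired one may instead pass to the strongly connected components of $\mathcal{G}(M)$ through Lemma~\ref{lemma:condensation}. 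I would also record explicitly that the inequality in hypothesis~\ref{c1} of Theorem~\ref{thm:GAS-max-interconnected} holds \emph{uniformly} in $x$, which it does because its upper bound $\sum_{j}\gamma_{ij}\psi_{ij}^{-1}$ is a constant strictly below $1$, independent of $x$.
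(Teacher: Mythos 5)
Your proposal is correct and follows essentially the same route as the paper's own proof: hypothesis~\ref{c2:cyclegain} plus Theorem~\ref{thm:HurwitzMetzler}\ref{p3:nec+suf} gives Hurwitzness of the auxiliary matrix $M$, Theorem~\ref{thm:equivalence} supplies max-interconnection gains $\{\psi_{ij}\}$, hypothesis~\ref{c1:bound} transfers the bound to the Jacobian, and Theorem~\ref{thm:GAS-max-interconnected} concludes. If anything, your version is slightly tighter: you keep the inequalities strict (the paper's proof only records $\sum_{j\in\mathcal{N}_i}\frac{J_{ij}(x)}{-J_{ii}(x)}\psi_{ij}^{-1}\le 1$, whereas Theorem~\ref{thm:GAS-max-interconnected}\ref{c1} formally requires $<1$), and you explicitly address the irreducibility hypothesis of Theorems~\ref{thm:equivalence} and~\ref{thm:HurwitzMetzler}, which the paper invokes without comment.
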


\begin{proof}
	By~\ref{c2:cyclegain} and Theorem~\ref{thm:HurwitzMetzler}\ref{p3:nec+suf}, $M$ is Hurwitz. Thus,
	by Theorem~\ref{thm:equivalence}, there exists a family of positive
	numbers $\{\psi_{ij}\}_{(i,j)\in \mathcal{E}}$ such that, for every
	$i\in \{1,2,\ldots,n\}$,
	\begin{align*}
	\sum_{j\in\mathcal{N}_i} \frac{m_{ij}}{-m_{ii}}\psi^{-1}_{ij} \le 1,
	\end{align*}
	and $\psi_c< 1$ for every $c\in \Phi$. This implies that, for every
	$x\in \real^n_{\ge 0}$, we have
	\begin{align*}
	\sum_{j\ne i} \frac{J_{ij}(x)}{-J_{ii}(x)} \psi^{-1}_{ij}
	\le \sum_{j\ne i} \gamma_{ij} \psi^{-1}_{ij}
	= \sum_{i\ne j} \frac{m_{ij}}{-m_{ii}}\psi^{-1}_{ij} \le 1.
	\end{align*}
	Therefore, for the family of positive numbers
	$\{\psi_{ij}\}_{(i,j)\in \mathcal{E}}$,
	\begin{align*}
	\sum_{j\ne i} \frac{J_{ij}(x)}{-J_{ii}(x)} \psi^{-1}_{ij} \le
	1,\quad\textup{for all } i\in\{1,\ldots,n\},
	\end{align*}
	and $\psi_c< 1$ for every $c\in \Phi$.  Therefore, by
	Theorem~\ref{thm:GAS-max-interconnected}, $\vect{0}_n$ is globally asymptotically stable for the dynamical
	system~\eqref{eq:nonlinear}.
\end{proof}

\section{Additional Concepts and proofs}\label{sec:additional}
\subsection{Cycle graphs, complementary cycle graphs and disjoint cycle sets}\label{sec:disjointcycle}
Let $M\in\real^{n\times n}$ be an irreducible Metzler matrix with negative diagonal elements and $\Phi=\{c_1,\ldots,c_r\}$ be the set of simple cycles in
$\mathcal{G}(M)$. Then the associated \textit{cycle graph} of $\mathcal{G}(M)$ is the graph
$\mathcal{G}_{\Phi}(M) =(V_{\Phi},\mathcal{E}_{\Phi}) $ with the node set $V_{\Phi} = \{1,\ldots,r\}$ and the edge set
$\mathcal{E}_{\Phi}$ given by
\begin{align*}
\mathcal{E}_{\Phi}= \{(i,j)\mid c_i\in \Phi,c_j\in \Phi, c_i \cap c_j\neq \emptyset\}.
\end{align*}
We define the \textit{complementary cycle graph} of $\mathcal{G}(M)$ by $\mathcal{G}^{\rm
	c}_{\Phi}(M) =(V_{\Phi},\mathcal{E}^{\rm c}_{\Phi})$.
Note that while the graph $\mathcal{G}(M)$ is a weighted digraph, the
graphs $\mathcal{G}_{\Phi}(M)$ and $\mathcal{G}^{\rm
	c}_{\Phi}(M)$ are unweighted undirected
graphs. Moreover, since $M$ is irreducible, the cycle graph $\mathcal{G}_{\Phi}(M)$ is always connected. The \emph{disjoint cycle set} $K_{\ell}^M $ is a set in which each element is a nonempty set of $\ell\geq 1$ cycles in $\Phi$ that form a complete graph in $\mathcal{G}^{\rm
	c}_{\Phi}(M)$.%, i.e., for all $\ell\geq 1$,

\begin{example}[Cycle graphs, complementary cycle graphs and $K_{\ell}^M$]\label{exam:cyclegraph}
	We illustrate the a few definitions using the Metzler matrix in Example~\ref{exam:concepts}, whose associated weighted digraph $\mathcal{G}(M)$ is shown in Fig.~\ref{fig:cyclesetsTotalgain}.
	
	The cycle graph $\mathcal{G}_\Phi (M)$ is given in Fig.~\ref{fig:GM_cycle} and the complementary cycle graph $\mathcal{G}^{\rm
		c}_{\Phi}(M)$ is given in Fig.~\ref{fig:GM_cycle_c}. From Fig.~\ref{fig:GM_cycle_c}, one can check that the disjoint cycle sets are clearly given by~\eqref{eq:disjointcyclesets}.
	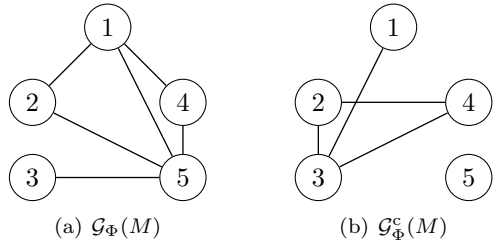
\begin{figure}[http]
		\centering
		\subfigure[$\mathcal{G}_{\Phi}(M)$]{
			\begin{tikzpicture}
			\tikzstyle{edge_style} = [draw=black, line width=0.5]
			\node[state,minimum size = 0.2cm] at (1, 0) (nodeone) {{$1$}};
			\node[state,minimum size = 0.2cm] at (0, -1)     (nodetwo)     {{$2$}};
			\node[state,minimum size = 0.2cm] at (0, -2)     (nodethree)     {{$3$}};
			\node[state,minimum size = 0.2cm] at (2, -1)     (nodefour)     {{$4$}};
			\node[state,minimum size = 0.2cm] at (2, -2)     (nodefive)     {{$5$}};
			\draw[edge_style]  (nodeone) edge (nodetwo);
			\draw[edge_style]  (nodeone) edge (nodefour);
			\draw[edge_style]  (nodeone) edge (nodefive);
			\draw[edge_style]  (nodetwo) edge (nodefive);
			\draw[edge_style]  (nodethree) edge (nodefive);
			\draw[edge_style]  (nodefour) edge (nodefive);
			\end{tikzpicture}\label{fig:GM_cycle}
		}
		\hspace{20pt}
		\subfigure[$\mathcal{G}^{\rm c}_{\Phi}(M)$]{
			\begin{tikzpicture}
			\tikzstyle{edge_style} = [draw=black, line width=0.5]
			\node[state,minimum size = 0.2cm] at (1, 0) (nodeone) {{$1$}};
			\node[state,minimum size = 0.2cm] at (0, -1)     (nodetwo)     {{$2$}};
			\node[state,minimum size = 0.2cm] at (0, -2)     (nodethree)     {{$3$}};
			\node[state,minimum size = 0.2cm] at (2, -1)     (nodefour)     {{$4$}};
			\node[state,minimum size = 0.2cm] at (2, -2)     (nodefive)     {{$5$}};
			\draw[edge_style]  (nodeone) edge (nodethree);
			\draw[edge_style]  (nodetwo) edge (nodethree);
			\draw[edge_style]  (nodetwo) edge (nodefour);
			\draw[edge_style]  (nodethree) edge (nodefour);
			\end{tikzpicture}\label{fig:GM_cycle_c}
		}\caption{Cycle graph and complementary cycle graph}
	\end{figure}
\end{example}

\subsection{Graph expansion and proof of Theorem~\ref{thm:HurwitzMetzler}\ref{p1:nec}}\label{sec:schurexpansion}
In this subsection, we reverse the Schur complement process and propose a graph expansion algorithm for the associated graph of a Metzler matrix. The purpose of the expansion is to separate cycles so that no cycle is strictly contained in any other cycle. Once we complete this construction, a simple proof of Theorem~\ref{thm:HurwitzMetzler}\ref{p1:nec} follows.

For a Metzler matrix $M\in\real^{n\times n}$ associated with $\mathcal{G}(M)=(V,\mathcal{E},M)$, we construct the expansion digraph $\mathcal{G}_{\textup{exp}}(M)=(V_{\textup{exp}},\mathcal{E}_{\textup{exp}},M_{\textup{exp}})$ and the expanded Metzler matrix $M_{\textup{exp}}$ using Algorithm~\ref{alg:expansion}.

\begin{algorithm}
	\caption{Graph expansion for Metzler matrices}
	\label{alg:expansion}
	\begin{algorithmic}[1]
		\STATE{\textbf{Input:} A Metzler matrix $M\in\real^{n\times n}$ and $\mathcal{G}(M)=(V,\mathcal{E},M)$}
		\STATE{\textbf{Initialize:} $V_{\textup{exp}}=V$, $\mathcal{E}_{\textup{exp}}=\emptyset$, $M_{\textup{exp}}=M$, $k=0$}
			\FOR{every edge $(i,j)\in\mathcal{E}$}			
				\STATE{$k=k+1$}
				\STATE{$V_{\textup{exp}}=V_{\textup{exp}}\cup\{n+k\}$}
				\STATE{
				$\mathcal{E}_{\textup{exp}} = \mathcal{E}_{\textup{exp}}\cup\{(i,n+k),(n+k,j)\}$}
				\STATE{$M_{\textup{exp}}=\begin{bmatrix}M_{\textup{exp}}&\mathbb{0}_{(n+k-1)\times1}\\
				\mathbb{0}_{1\times (n+k-1)}&-1   \end{bmatrix}$}
				\STATE{$M_{\textup{exp}}(n+k,i)=M_{\textup{exp}}(j,n+k)=\sqrt{m_{ji}}$}
				\ENDFOR
		\RETURN $\mathcal{G}_{\textup{exp}}(M)=(V_{\textup{exp}},\mathcal{E}_{\textup{exp}},M_{\textup{exp}})$
	\end{algorithmic}
\end{algorithm}

%\begin{algorithm}\label{alg:expansion}
%	\DontPrintSemicolon
%	\SetAlgoLined
%	\SetKwInOut{Input}{Input}\SetKwInOut{Output}{Output}
%	\Input{A Metzler matrix $M\in\real^{n\times n}$ and its associated digraph $\mathcal{G}(M)=(V,\mathcal{E},M)$}
%	\Output{$\mathcal{G}_{\textup{exp}}(M)=(V_{\textup{exp}},\mathcal{E}_{\textup{exp}},M_{\textup{exp}})$}
%	\BlankLine
%	\textbf{Initialize:} $V_{\textup{exp}}=V$, $\mathcal{E}_{\textup{exp}}=\emptyset$, $M_{\textup{exp}}=M$, $k=0$\;
%	\For{every edge $(i,j)\in\mathcal{E}$}
%	{
%		$k=k+1$\;
%		$V_{\textup{exp}}=V_{\textup{exp}}\cup\{n+k\}$\;
%		$\mathcal{E}_{\textup{exp}} = \mathcal{E}_{\textup{exp}}\cup\{(i,n+k),(n+k,j)\}$\;
%		$M_{\textup{exp}}=\begin{bmatrix}M_{\textup{exp}}&\mathbb{0}_{(n+k-1)\times1}\\
%		\mathbb{0}_{1\times (n+k-1)}&-1   \end{bmatrix}$\;
%		$M_{\textup{exp}}(n+k,i)=M_{\textup{exp}}(j,n+k)=\sqrt{m_{ji}}$
%	}
%	\caption{Graph expansion for Metzler matrices}
%\end{algorithm}

In words, for a Metzler matrix $M\in\real^{n\times n}$, Algorithm~\ref{alg:expansion} inserts a node on each directed edge in $\mathcal{G}(M)$ and assigns proper weights to the added nodes and edges.

\begin{lemma}
	For a Metzler matrix $M\in\real^{n\times n}$ and its expansion $M_{\textup{exp}}$, $M$ is Hurwitz if and only if $M_{\textup{exp}}$ is Hurwitz.
\end{lemma}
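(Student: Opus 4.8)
The plan is to prove the equivalence by \emph{reversing} the subdivision performed by Algorithm~\ref{alg:expansion}: I will show that eliminating all of the inserted nodes from $M_{\textup{exp}}$ by Schur complementation returns exactly $M$, and then invoke the Schur-complement characterization of Lemma~\ref{lemma:schurcomplements}. First I would record the structure of $M_{\textup{exp}}$. Each inserted node $n+k$ is the head of exactly one edge, $(i,n+k)$, and the tail of exactly one edge, $(n+k,j)$, with $i,j\in V$; hence the inserted nodes are pairwise non-adjacent, and we may order the vertices of $\mathcal{G}_{\textup{exp}}(M)$ so that $1,\dots,n$ are the original nodes and $n+1,\dots,n+|\mathcal{E}|$ are the inserted ones. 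In this ordering $M_{\textup{exp}}=\left[\begin{smallmatrix} A & B\\ C & D\end{smallmatrix}\right]$, where $A$ carries the diagonal $\diag(m_{11},\dots,m_{nn})$ of $M$ (each subdivided edge having been moved onto its inserted node), $D=-I_{|\mathcal{E}|}$, and for the node $n+k$ sitting on the edge $i\to j$ the only nonzero entries of its row and column are $C_{(n+k),i}=\sqrt{m_{ji}}$ and $B_{j,(n+k)}=\sqrt{m_{ji}}$. In particular $M_{\textup{exp}}$ is Metzler, its inserted diagonal entries $-1$ are negative, and all off-diagonal entries are nonnegative because $m_{ji}\ge 0$ along every edge.

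The key step is the Schur complement with respect to $D$. Since $D=-I_{|\mathcal{E}|}$, we have $A-BD^{-1}C=A+BC$, and the product $BC$ is supported exactly on the entries $(j,i)$: the node $n+k$ on edge $i\to j$ contributes $B_{j,(n+k)}C_{(n+k),i}=\sqrt{m_{ji}}\cdot\sqrt{m_{ji}}=m_{ji}$ to $(BC)_{ji}$ and nothing elsewhere. Because the inserted nodes are mutually non-adjacent, these contributions do not overlap and create no fill-in among the inserted nodes; hence $A+BC=M$. Equivalently, eliminating the inserted nodes one at a time reverses the subdivision, each elimination restoring the original weight $m_{ji}$ on the edge $i\to j$ while only ever exhibiting the diagonal value $-1$ on the node being removed.

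Finally I would conclude with Lemma~\ref{lemma:schurcomplements}. Running the iterated Schur process on $M_{\textup{exp}}$ with the inserted nodes ordered last, the first $|\mathcal{E}|$ steps remove the inserted nodes, each contributing the negative diagonal entry $-1$ and, by the previous paragraph, producing $M$ after the last of them; the remaining $n$ steps are precisely the Schur process for $M$. Therefore every diagonal element of every iterated Schur complement of $M_{\textup{exp}}$ is negative if and only if the same holds for $M$, and Lemma~\ref{lemma:schurcomplements} yields that $M_{\textup{exp}}$ is Hurwitz if and only if $M$ is Hurwitz. The degenerate case in which some $m_{ii}\ge 0$ needs no special treatment: then both $M$ and $M_{\textup{exp}}$ fail the diagonal test, consistent with Corollary~\ref{coro:negativediagonal}.

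I expect the main obstacle to be the bookkeeping in the key step, namely verifying that the inserted nodes neither interact with one another nor perturb the original diagonal during elimination, so that the Schur complement recovers $M$ on the nose rather than a matrix with altered edge weights. A robust alternative that avoids the Schur machinery entirely is to use Lemma~\ref{lemma:equivalentchar}\ref{p4:right}: given $\xi>\mathbb{0}_n$ with $M\xi<\mathbb{0}_n$, set $\xi_{\textup{exp}}$ equal to $\xi$ on the original nodes and to $\sqrt{m_{ji}}\,\xi_i+\epsilon$ on the node inserted on edge $i\to j$, and check that a sufficiently small $\epsilon>0$ gives $\xi_{\textup{exp}}>\mathbb{0}$ and $M_{\textup{exp}}\xi_{\textup{exp}}<\mathbb{0}$; the converse follows by restricting a witness $\xi_{\textup{exp}}$ for $M_{\textup{exp}}$ back to the original nodes and using the inserted-node inequalities $\xi_{\textup{exp},(n+k)}>\sqrt{m_{ji}}\,\xi_i$ to dominate $\sum_j m_{ij}\xi_j$.
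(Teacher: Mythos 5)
Your proof is correct and follows essentially the same route as the paper's own (much terser) argument: eliminate the inserted nodes of $M_{\textup{exp}}$ by Schur complementation, observe that this recovers $M$ exactly while only ever exposing the negative diagonal entries $-1$ and leaving the original diagonal untouched, and conclude via Lemma~\ref{lemma:schurcomplements}. Your write-up just makes explicit the bookkeeping (block structure $\left[\begin{smallmatrix} A & B\\ C & D\end{smallmatrix}\right]$ with $D=-I$, mutual non-adjacency of inserted nodes, absence of fill-in) that the paper leaves implicit, and the sketched alternative via Lemma~\ref{lemma:equivalentchar}\ref{p4:right} is a valid independent route but not needed.
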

\begin{proof}
	The Metzler matrix $M$ can be recovered from $M_{\textup{exp}}$ by removing all the added nodes using the Schur complement, and the diagonal elements of the remaining nodes do not change during the elimination. Therefore, by Lemma~\ref{lemma:schurcomplements}, $M$ is Hurwitz if and only if $M_{\textup{exp}}$ is Hurwitz.
\end{proof}

Now we are ready to give a proof to Theorem~\ref{thm:HurwitzMetzler}\ref{p1:nec}.
\begin{proof}[Proof of Theorem~\ref{thm:HurwitzMetzler}\ref{p1:nec}]
	By construction, any cycle in $\mathcal{G}_{\textup{exp}}(M)$ can show up as a leading principal submatrix after a permutation on $M_{\textup{exp}}$. Since $M$ is Hurwitz, $M_{\textup{exp}}$ is also Hurwitz and by Lemma~\ref{lemma:equivalentchar}\ref{p3:minor}, the determinant of the negative leading principal submatrix must be positive, i.e., the cycle gain must be less than $1$.
\end{proof}

\section{conclusion}\label{sec:conclusion}
In this paper, we obtained and characterized the graph-theoretic necessary and sufficient conditions for the Hurwitzness of Metzler matrices. By establishing connections with the well-known input-to-state stability theory and small-gain theorems, we were able to derive stability conditions for linear Metzler systems based on two different forms of ISS gains. These conditions give insights on how the cycles and cycle structures in the associated digraph of the Metzler matrices play a role in determining system stability. We also extended our results to the case of nonlinear monotone systems and obtained sufficient conditions for stability.

\appendix
\section{Proof of Lemma~\ref{lemma:Metzlerdeterminant}}\label{appen:determinants}
In order to prove Lemma~\ref{lemma:Metzlerdeterminant}, we need a few results regarding the graph-theoretic interpretations of determinants. For a weighted digraph $\mathcal{G}=(V,\mathcal{E},W)$,  a \emph{factor} $F=\{c_1,\dots,c_r\}$ of $\mathcal{G}$ satisfies
\begin{enumerate}
	\item each $c_i\in F$ is either a self loop or a simple cycle;
	\item $c_i\cap c_j=\emptyset$, for all $i\neq j$;
	\item $\cup_{i=1}^rc_i=V$.
\end{enumerate}
Note that the set of factors may be empty and in this case the determinant of the matrix corresponding to the digraph is $0$.

For a matrix $A\in\real^{n\times n}$, the determinant of $A$ can be computed based on the factors of $\mathcal{G}(A)$. For a simple cycle or a self loop $c$ in $\mathcal{G}(A)$, we define $A(c)$ to be the product of the edge weights along the cycle or the self loop. Then, we have the following lemma.

\begin{lemma}[Graph-theoretic interpretation of determinants {\cite[Theorem 1]{JM-DO-DV-GW:89}}]
	Let $A\in\real^{n\times n}$ be a matrix with digraph $\mathcal{G}(A)=(V,\mathcal{E},A)$. Suppose $\mathcal{G}(A)$ has factors $F_k=\{c_{k_1},c_{k_2},\dots,c_{k_{r_k}}\}$, $k=1,\dots,q$, then
	\begin{equation}\label{eq:determinant}
	\det(A) = (-1)^n\sum_{k=1}^q(-1)^{r_k}A(c_{k_1})A(c_{k_2})\cdots A(c_{k_{r_k}}).
	\end{equation}
\end{lemma}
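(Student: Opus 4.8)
The plan is to start from the Leibniz (permutation) expansion of the determinant and reorganize the sum according to the disjoint-cycle decomposition of permutations. Recall that
\[
\det(A) = \sum_{\sigma\in S_n}\operatorname{sgn}(\sigma)\prod_{i=1}^n a_{i,\sigma(i)}.
\]
First I would note that a summand is nonzero only if $a_{i,\sigma(i)}\neq 0$ for every $i$, i.e., only if each edge $(\sigma(i),i)$ belongs to $\mathcal{E}$. Every permutation $\sigma$ factors uniquely into disjoint cyclic orbits: an orbit of length one at index $i$ forces $\sigma(i)=i$ and contributes the diagonal weight $a_{ii}$ (a self loop), whereas an orbit of length at least two contributes a product of off-diagonal weights along a simple cycle in $\mathcal{G}(A)$. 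Thus the permutations yielding nonzero summands are precisely those whose orbits are supported on edges of $\mathcal{G}(A)$, and the orbits of such a $\sigma$ are disjoint and cover $V$ using self loops and simple cycles, which is exactly the definition of a factor $F_k=\{c_{k_1},\dots,c_{k_{r_k}}\}$. This gives a bijection between the nonzero summands and the factors of $\mathcal{G}(A)$.

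Next I would identify the weight product of each summand with the corresponding factor weight. For an orbit supporting a cycle or self loop $c$, the product $\prod_{i}a_{i,\sigma(i)}$ restricted to that orbit equals the product of the edge weights traversed along $c$, which is by definition $A(c)$. Here one must be careful with the paper's convention that $a_{ij}$ is the weight on the edge $(j,i)$, so the permutation orbit traverses the directed cycle in the reverse orientation; since $A(c)$ is a product over a fixed edge set, this orientation issue does not affect the value. Because the orbits are disjoint and cover $V$, the full product factors as $\prod_{i=1}^n a_{i,\sigma(i)} = A(c_{k_1})\cdots A(c_{k_{r_k}})$.

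The key technical step is the sign computation. A single cyclic permutation of length $\ell$ is a product of $\ell-1$ transpositions and hence has sign $(-1)^{\ell-1}$. For a permutation $\sigma$ whose factor consists of cycles of lengths $\ell_1,\dots,\ell_{r_k}$ with $\sum_{j}\ell_j = n$, multiplicativity of the sign gives
\[
\operatorname{sgn}(\sigma) = \prod_{j=1}^{r_k}(-1)^{\ell_j-1} = (-1)^{n-r_k} = (-1)^{n}(-1)^{r_k}.
\]
Substituting this identity together with the factored weight product into the Leibniz expansion and pulling $(-1)^n$ out of the sum yields exactly~\eqref{eq:determinant}.

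I expect the main obstacle to be bookkeeping rather than conceptual: establishing the bijection so that every index is accounted for (including fixed points, which must be read as self loops rather than dropped) and keeping the edge-orientation/transpose convention consistent with the definition of $A(c)$, so that no spurious sign or missing diagonal factor is introduced. Once the bijection and the sign identity $\operatorname{sgn}(\sigma)=(-1)^{n-r_k}$ are in place, the formula follows directly.
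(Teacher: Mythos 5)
Your proposal is correct, but there is no in-paper proof to compare it with: the paper imports this lemma verbatim from the literature, citing \cite[Theorem 1]{JM-DO-DV-GW:89}, and never proves it. What you have written is a correct, self-contained proof by the standard route for this classical fact (the Coates--Harary digraph--determinant formula): expand $\det(A)$ by the Leibniz formula, observe that a permutation $\sigma$ contributes a nonvanishing term exactly when every edge $(\sigma(i),i)$ lies in $\mathcal{E}$, and identify the orbit decomposition of such a permutation with a factor of $\mathcal{G}(A)$, fixed points being read as self loops and longer orbits as simple cycles. Your handling of the two delicate points is right: the orientation issue created by the paper's convention that $a_{ij}$ weights the edge $(j,i)$ is harmless because $A(c)$ depends only on the edge set of $c$, and the sign identity $\operatorname{sgn}(\sigma)=\prod_{j}(-1)^{\ell_j-1}=(-1)^{n-r_k}=(-1)^{n}(-1)^{r_k}$ is exactly what produces the prefactor in \eqref{eq:determinant}. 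Two cosmetic points you could tighten: to make the claimed bijection airtight, state the inverse map explicitly (given a factor, define $\sigma(i)$ as the unique in-neighbor of $i$ within the cycle or self loop containing $i$; disjointness and the covering property make this a well-defined permutation whose Leibniz term is nonzero, and distinct factors give distinct permutations); and note that when $\mathcal{G}(A)$ admits no factor the same argument shows every Leibniz term vanishes, so $\det(A)=0$, which is the degenerate case the paper records in the sentence following the lemma.
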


In the case of irreducible Metzler matrices with negative diagonal elements, we can rewrite~\eqref{eq:determinant} in terms of the cycle gains. Let $M\in\real^{n\times n}$ be an irreducible Metzler matrix with negative diagonal elements and $\Phi=\{c_1,\dots,c_r\}$ be the set of simple cycles of $\mathcal{G}(M)$, then a \emph{cycle factor} $F^c=\{c_1,\dots,c_t\}$ of $\mathcal{G}(M)$ satisfies
\begin{enumerate}
	\item $F^c\subset \Phi$ and $F^c\neq \emptyset$;
	\item $c_i\cap c_j=\emptyset$, for all $c_i,c_j\in F^c$ and $i\neq j$.
\end{enumerate}
Suppose $\mathcal{G}(M)$ has cycle factors $F^c_k=\{c_{k_1},c_{k_2},\dots,c_{k_{t_k}}\}$, $k=1,\dots,q$, then each cycle factor $F^c_k$ can be expanded to a factor of $\mathcal{G}(M)$ by adding the self loops at the nodes that are not on any simple cycles in $F^c_k$ and by doing this, all the factors except the one that consists of purely self loops can be recovered. Since the diagonal elements of $M$ are negative, we can factor out $\prod_{i=1}^n (-m_{ii})$ in the general formula~\eqref{eq:determinant} and rewrite the equation for $M$ as follows,
\begin{equation}\label{eq:determinantMetzler}
\det(M) = \prod_{i=1}^nm_{ii}+\prod_{i=1}^nm_{ii}\sum_{k=1}^q(-1)^{t_k}\gamma_{c_{k_1}}\gamma_{c_{k_2}}\cdots \gamma_{c_{k_{t_k}}}.
\end{equation}

By definition, the disjoint cycle sets are related to the cycle factors as $K_{\ell}^M=\{F_{k}^c\,|\,t_k=\ell\}$, thus we can group the cycle factors with the same cardinality in \eqref{eq:determinantMetzler} and obtain \eqref{eq:determinantMetzlermain_sub} for $I=\{1,\dots,n\}$. For $i=\{1,\dots,n-1\}$ and $I=\{1,\dots,i\}$, the same procedure works for the leading principal submatrices $M_I$ and \eqref{eq:determinantMetzlermain_sub} follows except for the case when $\Phi_{M_I}$ is empty. If $\Phi_{M_I}$ is empty, i.e., $\mathcal{G}(M_I)$ is acyclic, then the determinant $\det(M_I)$ is equal to the product of the diagonal elements. By \eqref{eq:matrixgain_sub}, we have $\gamma_{M_I}=0$ in this case and thus \eqref{eq:determinantMetzlermain_sub} holds.

\section{Proof of Corollary~\ref{thm:Arcak}}\label{appen:Arcak}

\textup{(i) $\implies$ (ii):} Since $M$ is Hurwitz,
by Theorem~\ref{thm:equivalence}, for every $(j,i)\in \mathcal{E}$,
there exists $\psi_{ij}>0$ such that
\begin{align}
&\sum_{j\in\mathcal{N}_i}\left(\frac{m_{ij}}{-m_{ii}}\right)\psi^{-1}_{ij}< 1
,&&\quad\forall i\in \{1,\ldots,n\}, \label{eq:SGT-condition1}\\
&\psi_{c} < 1, &&\quad\forall
c\in \Phi\label{eq:SGT-condition2}.
\end{align}
Let $c\in \Phi$ and assume that
$c=(1,\ldots,k,1)$. Then, for every $k'\in \{1,\ldots,k\}$, we define
\begin{align*}
\widehat{\theta}^c_{k'}= \begin{cases}
\left(\frac{m_{k'+1,k'}}{-m_{k'+1,k'+1}}\right)\psi^{-1}_{k'+1,k'},&\quad k'\leq k-1,\\
\left(\frac{m_{1,k}}{-m_{11}}\right)\psi^{-1}_{1,k},&\quad k' = k.
\end{cases}
\end{align*}
First note that \eqref{eq:SGT-condition2} can be
written as
\begin{align*}
\prod_{i\in c} \widehat{\theta}^c_i > \gamma_c,\qquad\forall c\in \Phi.
\end{align*}
Since $\mathcal{G}(M)$ is connected and cactus, no two simple cycles share
an edge. Therefore, one can write
\eqref{eq:SGT-condition1} as follows:
\begin{align*}
\sum_{c\in \Phi} \widehat{\theta}^c_i < 1,\qquad\forall i\in c.
\end{align*}
By a straightforward continuity argument, one can show that,
for every $c\in \Phi$ and $i\in c$, there exists
$\theta^c_i>0$ such that
\begin{align*}
\begin{aligned}
&\prod_{i\in c} \theta^c_i > \gamma_c,&&\qquad\forall c\in \Phi, \\
&\sum_{c\in \Phi} \theta^c_i =1,&&\qquad\forall i\in c.
\end{aligned}
\end{align*}

\textup{(ii) $\implies$ (i):} Now suppose that,
for every $c\in \Phi$ and every $i\in c$, there exists
$\theta^c_i>0$ which satisfies \eqref{eq:murat}. Let $c=(1,\ldots,k,1)$, and for every $k'\in\{1,\dots,k-1\}$
\begin{equation*}
\psi_{k'+1,k'} = \left(\frac{m_{k'+1,k'}}{-m_{k'+1,k'+1}}\right)\left(\theta^c_{k'}\right)^{-1},
\end{equation*}
and
\begin{equation*}
\psi_{1,k}= \left(\frac{m_{1,k}}{-m_{11}}\right)\left(\theta^c_{k}\right)^{-1}.
\end{equation*}
By a continuity argument, \eqref{eq:murat} can be
written as~\eqref{eq:SGT-condition1} and~\eqref{eq:SGT-condition2}.
Thus, by Theorem~\ref{thm:equivalence}, the matrix $M$ is Hurwitz.

\section*{Acknowledgments}
The third author wishes to thank Dr.\ John W.\ Simpson-Porco for early
inspiring discussions. The authors would like to thank Kevin
D.\ Smith and Dr.\ Guosong Yang for numerous insightful comments and discussions on this topic.

\bibliographystyle{siamplain}
\bibliography{alias,Main,FB}

\begin{thebibliography}{10}

\bibitem{RA-JEM-TSR:88}
{\sc R.~Abraham, J.~E. Marsden, and T.~S. Ratiu}, {\em Manifolds, Tensor
  Analysis, and Applications}, vol.~75 of Applied Mathematical Sciences,
  Springer, 2~ed., 1988.

\bibitem{DA-EDS:03}
{\sc D.~Angeli and E.~D. Sontag}, {\em Monotone control systems}, IEEE
  Transactions on Automatic Control, 48 (2003), pp.~1684--1698,
  \url{https://doi.org/10.1109/TAC.2003.817920}.

\bibitem{MArcak:11}
{\sc M.~Arcak}, {\em Diagonal stability on cactus graphs and application to
  network stability analysis}, IEEE Transactions on Automatic Control, 56
  (2011), pp.~2766--2777, \url{https://doi.org/10.1109/TAC.2011.2125130}.

\bibitem{FB-PC-MEV:12}
{\sc F.~Blanchini, P.~Colaneri, and M.~E. Valcher}, {\em Co-positive {Lyapunov}
  functions for the stabilization of positive switched systems}, IEEE
  Transactions on Automatic Control, 57 (2012), pp.~3038--3050,
  \url{https://doi.org/10.1109/TAC.2012.2199169}.

\bibitem{CB:17}
{\sc C.~Briat}, {\em Sign properties of {M}etzler matrices with applications},
  Linear Algebra and its Applications, 515 (2017), pp.~53--86,
  \url{https://doi.org/10.1016/j.laa.2016.11.011}.

\bibitem{FB:19}
{\sc F.~Bullo}, {\em Lectures on Network Systems}, Kindle Direct Publishing,
  {1.3}~ed., July 2019, \url{http://motion.me.ucsb.edu/book-lns}.
\newblock With contributions by J. Cort{\'e}s, F. D\"orfler, and S.
  Mart{\'\i}nez.

\bibitem{GC:17}
{\sc G.~Como}, {\em On resilient control of dynamical flow networks}, Annual
  Reviews in Control, 43 (2017), pp.~80--90,
  \url{https://doi.org/10.1016/j.arcontrol.2017.01.001}.

\bibitem{SC:16}
{\sc S.~Coogan}, {\em Separability of {Lyapunov} functions for contractive
  monotone systems}, in {IEEE} Conf.\ on Decision and Control, Las Vegas, USA,
  Dec. 2016, pp.~2184--2189, \url{https://doi.org/10.1109/CDC.2016.7798587}.

\bibitem{SC:19}
{\sc S.~Coogan}, {\em A contractive approach to separable {Lyapunov} functions
  for monotone systems}, Automatica, 106 (2019), pp.~349--357,
  \url{https://doi.org/10.1016/j.automatica.2019.05.001}.

\bibitem{SND-BSR-FRW:10}
{\sc S.~N. Dashkovskiy, B.~S. R\"uffer, and F.~R. Wirth}, {\em Small gain
  theorems for large scale systems and construction of {ISS} {Lyapunov}
  functions}, SIAM Journal on Control and Optimization, 48 (2010),
  pp.~4089--4118, \url{https://doi.org/10.1137/090746483}.

\bibitem{YE-DP-DA:17}
{\sc Y.~Ebihara, D.~Peaucelle, and D.~Arzelier}, {\em Analysis and synthesis of
  interconnected positive systems}, IEEE Transactions on Automatic Control, 62
  (2017), pp.~652--667, \url{https://doi.org/10.1109/TAC.2016.2558287}.

\bibitem{LF-SR:00}
{\sc L.~Farina and S.~Rinaldi}, {\em Positive Linear Systems: Theory and
  Applications}, John Wiley \& Sons, 2000.

\bibitem{MWS:85}
{\sc M.~W. Hirsch}, {\em Systems of differential equations that are competitive
  or cooperative {II}: {C}onvergence almost everywhere}, SIAM Journal on
  Mathematical Analysis, 16 (1985), pp.~423--439,
  \url{https://doi.org/10.1137/0516030}.

\bibitem{ZPJ-ART-LP:94}
{\sc Z.-P. Jiang, A.~R. Teel, and L.~Praly}, {\em Small-gain theorem for {ISS}
  systems and applications}, Mathematics of Control, Signals and Systems, 7
  (1994), pp.~95--120, \url{https://doi.org/10.1007/BF01211469}.

\bibitem{TL-ZJP-DJH:14}
{\sc T.~Liu, Z.-P. Jiang, and D.~J. Hill}, {\em Nonlinear Control of Dynamic
  Networks}, CRC Press, 2014.

\bibitem{JM-DO-DV-GW:89}
{\sc J.~S. Maybee, D.~D. Olesky, P.~{Van~den~Driessche}, and G.~Wiener}, {\em
  Matrices, digraphs, and determinants}, SIAM Journal on Matrix Analysis and
  Applications, 10 (1989), pp.~500--519, \url{https://doi.org/10.1137/0610036}.

\bibitem{ZM-WX-KHJ-SH:17}
{\sc Z.~{Meng}, W.~{Xia}, K.~H. {Johansson}, and S.~{Hirche}}, {\em Stability
  of positive switched linear systems: {W}eak excitation and robustness to
  time-varying delay}, IEEE Transactions on Automatic Control, 62 (2017),
  pp.~399--405, \url{https://doi.org/10.1109/TAC.2016.2531044}.

\bibitem{KSN-RS:10}
{\sc K.~S. Narendra and R.~Shorten}, {\em Hurwitz stability of {M}etzler
  matrices}, IEEE Transactions on Automatic Control, 55 (2010), pp.~1484--1487,
  \url{https://doi.org/10.1109/TAC.2010.2045694}.

\bibitem{AR:15}
{\sc A.~Rantzer}, {\em Scalable control of positive systems}, European Journal
  of Control, 24 (2015), pp.~72--80,
  \url{https://doi.org/10.1016/j.ejcon.2015.04.004}.

\bibitem{IWS:64}
{\sc I.~W. {Sandberg}}, {\em On the $\mathcal{L}_2$-boundedness of solutions of
  nonlinear functional equations}, The Bell System Technical Journal, 43
  (1964), pp.~1581--1599,
  \url{https://doi.org/10.1002/j.1538-7305.1964.tb04098.x}.

\bibitem{SS:76}
{\sc S.~Smale}, {\em On the differential equations of species in competition},
  Journal of Mathematical Biology, 3 (1976), pp.~5--7,
  \url{https://doi.org/10.1007/BF00307854}.

\bibitem{HLS:95}
{\sc H.~L. Smith}, {\em Monotone Dynamical Systems: An Introduction to the
  Theory of Competitive and Cooperative Systems}, American Mathematical
  Society, 1995.

\bibitem{EDS:89}
{\sc E.~D. Sontag}, {\em Smooth stabilization implies coprime factorization},
  IEEE Transactions on Automatic Control, 34 (1989), pp.~435--443,
  \url{https://doi.org/10.1109/9.28018}.

\bibitem{EDS:07}
{\sc E.~D. Sontag}, {\em Monotone and near-monotone biochemical networks},
  Systems and Synthetic Biology, 1 (2007), pp.~59--87,
  \url{https://doi.org/10.1007/s11693-007-9005-9}.

\bibitem{EDS-YW:95}
{\sc E.~D. Sontag and Y.~Wang}, {\em On characterizations of the input-to-state
  stability property}, Systems \& Control Letters, 24 (1995), pp.~351--359,
  \url{https://doi.org/10.1016/0167-6911(94)00050-6}.

\bibitem{MS-FW-RS:17}
{\sc M.~Souza, F.~Wirth, and R.~Shorten}, {\em A note on recursive {S}chur
  complements, block {H}urwitz stability of {M}etzler matrices, and related
  results}, IEEE Transactions on Automatic Control, 62 (2017), pp.~4167--4172,
  \url{https://doi.org/10.1109/TAC.2017.2682032}.

\bibitem{GGW-MC:99}
{\sc G.~G. Walter and M.~Contreras}, {\em Compartmental Modeling with
  Networks}, Birkh{\"a}user, 1999,
  \url{https://doi.org/10.1007/978-1-4612-1590-5}.

\bibitem{GZ:66(I)}
{\sc G.~Zames}, {\em On the input-output stability of time-varying nonlinear
  feedback systems -- {Part I: C}onditions derived using concepts of loop gain,
  conicity, and positivity}, IEEE Transactions on Automatic Control, 11 (1966),
  pp.~228--238, \url{https://doi.org/10.1109/TAC.1966.1098316}.

\end{thebibliography}
\end{document}